\documentclass[review,ams]{elsarticle}

\usepackage{soul,color}
\usepackage{xcolor,colortbl}
\usepackage{graphicx}
\usepackage{amsmath}
\usepackage{latexsym}
\usepackage{subfigure}
\usepackage{relsize}
\usepackage{float}
\usepackage{multirow}
\usepackage{amsthm}
\usepackage{amsmath}
\usepackage[mathscr]{euscript}
\usepackage{mathrsfs}
\usepackage[font=small,labelfont=bf]{caption}
\usepackage{amsmath, amsthm, amscd, amsfonts, amssymb}
\usepackage[top = 1in, bottom = 1in, left = .8in, right = .8in]{geometry}
\newtheorem{thm}{Theorem}

 \newtheorem{lem}[thm]{Lemma}
 \newtheorem{defn}[thm]{Definition}
 
\theoremstyle{definition}
\newtheorem{definition}{Example}[section]

\definecolor{Gray}{gray}{0.85}
\newcolumntype{a}{>{\columncolor{Gray}}c}
\begin{document}
\makeatletter
\def\ps@pprintTitle{%
   \let\@oddhead\@empty
   \let\@evenhead\@empty
   \let\@oddfoot\@empty
   \let\@evenfoot\@oddfoot
}
\makeatother
\begin{frontmatter}

\title{LDG method for solving spatial and temporal fractional nonlinear convection-diffusion equations}

\address[add1]{Department of Applied Mathematics, Faculty  of Mathematical Sciences, Shahid Rajaee Teacher Training University, Tehran, Iran}

\author[add1]{Majid~Rajabzadeh}

\author[add2]{Moein Khalighi\corref{cor1}}
\ead{moein.khalighi@utu.fi}
\address[add2]{Department of Computing, Faculty of Technology, University of Turku, Finland}

\cortext[cor1]{Corresponding author}

\begin{abstract}
This paper focuses on a nonlinear convection-diffusion equation with space and time-fractional Laplacian operators of orders $1<\beta<2$ and $0<\alpha\leq1$, respectively. We develop local discontinuous Galerkin methods, including Legendre basis functions, for a solution to this class of fractional diffusion problem, and prove stability and optimal order of convergence $O(h^{k+1}+(\Delta t)^{1+\frac{p}{2}}+p^2)$. This technique turns the equation into a system of first-order equations and approximates the solution by selecting the appropriate basis functions. Regarding accuracy and stability, the basis functions greatly improve the method. According to the numerical results, the proposed scheme performs efficiently and accurately in various conditions and meets the optimal order of convergence. 
\end{abstract}

\end{frontmatter}
\section*{Introduction}\label{sec:intro}
In recent years, fractional differential equations have gained popularity among researchers due to their flexibility in science and engineering, which provides more degrees of freedom for integrodifferential equations in modeling various phenomena, such as optimal control problems \cite{soli1, soli2}, complex networks~\cite{PhysRevE.95.022409,10.1371/journal.pone.0154983}, and viscoelastic systems~\cite{matlob2019concepts}. However, the lack of standardized definitions for fractional differential operators~\cite{de2014review} presents challenges. The development of advanced operators and differential equations necessitates sophisticated techniques like the modified Galerkin methods~\cite{SAFDARI202122, SAFDARI202245}.

This paper introduces a novel fractional partial differential equation (FPDE) featuring a new numerical solution. It explores the accuracy and stability of the proposed method by examining a nonlinear convection-diffusion equation that incorporates both time and space fractional operators as follows:
\begin{equation} \label{eq1-1}
 \frac{{\partial^\alpha V(\xi ,t)}}{{\partial t^\alpha}}+ \frac{{\partial F(V)}}{{\partial \xi}} = \frac{{\partial }}{{\partial \xi}}{\left( {S\left( V \right)\frac{{\partial V(\xi,t)}}{{\partial \xi}}} \right)} + b\left( { - {{\left( { - \mathcal L } \right)}^{\frac{\beta }{2}}}} \right)V(\xi ,t)+ g(\xi,t),\quad(\xi,t)\in \mathbb{R} \times (0,T),
\end{equation}
\begin{equation*}
V(\xi,0) = {V_0}(\xi), \quad \xi \in \,\mathbb{R},
\end{equation*}
where the first term is defined as follows \cite{2}:    
\begin{equation*}
\frac{{{\partial ^\alpha }V(\xi,t) }}{{\partial {t^\alpha }}} = \frac{1}{{\Gamma (1 - \alpha )}}\int_0^t \frac{{\partial V(\xi ,\eta )}}{{\partial \eta }}\frac{{d\eta }}{{{{(t - \eta )}^{\alpha} }}} , \quad 0<\alpha\leq1, 
\end{equation*}
and $ F, S:\mathbb{R} \to \mathbb{R} $ are Lipschitz  functions such that $S\ge0$ is bounded, the term $\frac{{\partial }}{{\partial \xi}}{\left( {S\left( V \right)\frac{{\partial V(\xi,t)}}{{\partial \xi}}} \right)}$ is the nonlinear diffusion 
and $\frac{{\partial F(V)}}{{\partial \xi}}$ is  the nonlinear convection, ${b}\ge0$ is constant,
and the operator ${\left( { - \mathcal L } \right)^{\frac{\beta }{2}}}$  indicates the fractional Laplacian derivative, a generalized form of fractional spatial derivative, that is deﬁned by a singular integral \cite{27, 29}:
 \begin{equation}
{\left( \mathcal{-L}  \right)^{\frac{\beta }{2}}}(V(\xi,t))= {c_\beta }\int_{\left| z \right| > 0} {\frac{{V(\xi + z,t) - V(\xi,t)}}{{{{\left| z \right|}^{1 + \beta }}}}} dz,\,\,\,\,\,\,\ \beta \in (0,2)\,,\,\,{c_\beta} > 0.
\end{equation}

Through this article, the initial value of function $F$ is assumed zero, $ F(0)=0 $. Equation \eqref{eq1-1} with non-integer order of the operators has potential applications in various fields of study, for instance, explosives and semiconductors devices \cite{ci29}, option pricing models for mathematical ﬁnance \cite{ci8},  hydrodynamics, dislocation dynamics, molecular biology \cite{ci13}, and many other areas of research \cite{ci2,ci3,ci11}.

There are many numerical solutions of FPDEs, for example, finite difference  \cite{11}, boundary element \cite{kh2020}, and finite element methods \cite{1}. However, a few numerical methods have been developed for models with fractional Laplacian operators. A class of finite element methods \cite{304} has paved the way for developing different types of Galerkin methods, such as discontinuous Galerkin (DG) scheme for less smooth problems \cite{BASSI1997267}. The DG method has been applied for solving fractional convection-diffusion equations in \cite{8,cof,ABo}. 

Local discontinuous Galerkin (LDG) methods \cite{cof, Q, 2} have been appropriately utilized for time-dependent partial equations with higher derivatives. The main idea behind the LDG methods is converting the original equation into a first-order system by introducing some auxiliary variables for applying the DG method. Recently, this method has been exploited for a distributed-order time and space-fractional convection–diffusion with Schrödinger-type equations \cite{aboelenen2018local}. The accuracy of the LDG method significantly depends on the selection of appropriate basis functions. This paper uses Legendre basis functions to approximate Equation~\eqref{eq1-1}. The Legendre polynomials are well-known as a system of complete and orthogonal polynomials, and their mathematical properties and applications have been discussed in many contexts, such as Physics and Mathematics.

This article is compiled as follows:
 In Section \ref{Sec:1}, we give some required basic definitions. In Section \ref{Sec:2}, we use the LDG method to approximate the problem. In sections \ref{Sec:3} and \ref{Sec:4}, we prove the stability and convergence of the method.
 In Section \ref{Sec:5}, with a few numerical examples, we numerically confirm the consequence of Section \ref{Sec:4}.

\section{Preliminary definitions}\label{Sec:1}
This section introduces some basic definitions of fractional calculus \cite{4, 100}.
 Left and right Riemann-Liouville fractional integral of order $ \beta $  are defined as
\begin{equation}
\mathop {_c\mathscr L}\nolimits _\xi^{ \beta} z(\xi) = \frac{1}{{\Gamma (\beta)}}{\int_c^\xi {\left( {\xi  -\varepsilon  } \right)} ^{\beta- 1}}z(\varepsilon )d\varepsilon, \quad \xi>c, \quad \beta \in \mathbb{R}^+,
\end{equation}
\begin{equation}
\mathop {_\xi \mathscr L}\nolimits _c^{ \beta} z(\xi) = \frac{1}{{\Gamma (\beta)}}{\int_\xi^c {\left( {\varepsilon - \xi} \right)} ^{\beta- 1}}z(\varepsilon  )d\varepsilon, \quad \xi<c, \quad \beta \in \mathbb{R}^+,
\end{equation}
where $ c \in \mathbb{R} $. 
For $ \beta  \in [\gamma   - 1,\gamma ) $, the left-sided and right-sided fractional derivatives of order $ \beta $ are defined as follow:
\begin{equation*}
{}_{-\infty}D_\xi^\beta  z(\xi) = \frac{1}{{\Gamma (n - \beta )}} \frac{d^\gamma}{d\xi^\gamma}\int_{-\infty}^\xi {{{\left( {\xi - \varepsilon } \right)}^{\gamma - \beta  - 1}}} z(\varepsilon )d\varepsilon,
\end{equation*}
\begin{equation*}
\hspace*{1.2cm}{}_\xi D_\infty^\beta  z(\xi) = \frac{1}{{\Gamma (\gamma - \beta )}} \left(-\frac{d}{d\xi}\right)^\gamma \int_\xi^\infty {{{\left( {\varepsilon -\xi } \right)}^{\gamma - \beta  - 1}}} z(\varepsilon )d\varepsilon.
\vspace{+5pt}\end{equation*}
\begin{defn}
For $ 0<\beta<1 $ we define 
\begin{equation*}{\mathcal{L} _{ - \frac{\beta }{2}}}V(\xi) =  - \frac{{\mathop {{}_{ - \infty }\mathscr{T}}\nolimits_\xi^{  \beta } V(\xi) + \mathop {{}_\xi \mathscr{T}}\nolimits_\infty ^{  \beta } V(\xi)}}{{2\cos (\beta \pi /2)}}.
\end{equation*}

For $ 1<\beta<2 $, we have 
\begin{equation} \label{1-2}
 - {\left( { - \mathcal{L} } \right)^{\frac{\beta }{2}}}V(\xi) = \frac{{{d^2}}}{{d{\xi^2}}}\left( {{\mathcal{L} _{\frac{{\beta  - 2}}{2}}}V} \right) = {\mathcal{L} _{  \frac{{\beta  - 2}}{2}}}\left( {\frac{{{d^2 V}}}{{d{\xi^2}}}} \right) = \frac{d}{{d\xi}}\left( {{\mathcal{L} _{\frac{{\beta  - 2}}{2}}}\frac{dV}{{d\xi}}} \right).
\end{equation}
\end{defn}
\begin{lem}\cite{Q} \label{lem20}
The fractional integration operator $ {\mathcal{L} _{ - \beta }} $ is bounded in $ L^2(\Omega ): $
\begin{equation*}
{\left\| {{\mathcal{L} _{ - \beta }}V(\xi ,t)} \right\|_{{L^2}(\Omega )}} \le C{\left\| {V(\xi ,t)} \right\|_{{L^2}(\Omega )}}.
\end{equation*}

where $ C $ is a constant.
\end{lem}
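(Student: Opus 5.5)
The plan is to prove the bound directly from the definition of $\mathcal{L}_{-\beta}$ as a symmetrized Riemann--Liouville integral. Write
\[
\mathcal{L}_{-\beta}V=-\frac{{}_{a}\mathscr{L}^{2\beta}_{\xi}V+{}_{\xi}\mathscr{L}^{2\beta}_{b}V}{2\cos(\beta\pi)},
\]
where $\Omega=(a,b)$ is bounded and $0<2\beta<1$, so that $\cos(\beta\pi)\neq 0$. In practice $V$ is compactly supported in $\Omega$, and the integrals from $\pm\infty$ reduce to integrals over $\Omega$. By the triangle inequality it then suffices to bound each one-sided integral separately in $L^2(\Omega)$, for fixed $t$. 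The constant $C$ picks up the harmless factor $1/(2|\cos(\beta\pi)|)$.

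For the left integral, I would write it as a convolution on $\mathbb{R}$. Set $K(s)=s^{2\beta-1}\chi_{(0,b-a)}(s)/\Gamma(2\beta)$ and let $\tilde V$ be the extension of $V$ by zero. Then ${}_{a}\mathscr{L}^{2\beta}_{\xi}V=(K*\tilde V)|_{\Omega}$. Since $2\beta-1>-1$, we have $K\in L^1(\mathbb{R})$ with
\[
\|K\|_{L^1}=\frac{(b-a)^{2\beta}}{\Gamma(2\beta+1)}.
\]
Young's convolution inequality $\|K*\tilde V\|_{L^2}\le\|K\|_{L^1}\|\tilde V\|_{L^2}$ then gives the bound. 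The right-sided integral is handled the same way with the reflected kernel $K(-s)$, which has the same $L^1$ norm.

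Combining the two estimates yields
\[
\|\mathcal{L}_{-\beta}V\|_{L^2(\Omega)}\le\frac{(b-a)^{2\beta}}{|\cos(\beta\pi)|\,\Gamma(2\beta+1)}\|V\|_{L^2(\Omega)},
\]
uniformly in $t$.

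The main obstacle is the domain. If $\Omega$ were all of $\mathbb{R}$, the truncation trick fails. The kernel $s^{2\beta-1}$ is not integrable at infinity, and the Riemann--Liouville integral is not bounded on $L^2(\mathbb{R})$: its Fourier symbol behaves like $|\omega|^{-2\beta}$, which blows up near $\omega=0$. So the statement must be read on a bounded $\Omega$, or for functions with compact support in $\Omega$, which is the setting in which the LDG scheme is actually posed. I would state this assumption explicitly.

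Alternatively, one can use the Fourier characterization $\widehat{\mathcal{L}_{-\beta}V}(\omega)=-|\omega|^{-2\beta}\hat V(\omega)$. This gives a clean proof on weighted spaces, but it does not give a uniform $L^2$ constant, so the Young's inequality argument on a bounded interval is the route I would commit to.
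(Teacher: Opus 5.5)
Your proof is correct. The paper gives no argument of its own for this lemma; it only defers to the cited reference. Your argument is the standard proof behind that citation: the classical $L^p$ bound for Riemann--Liouville integrals on a finite interval, as in Samko--Kilbas--Marichev.

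On $\Omega=(a,b)$, each one-sided integral of order $\mu=2\beta$ is the convolution of the zero extension of $V$ with the $L^1$ kernel $s^{\mu-1}\chi_{(0,b-a)}(s)/\Gamma(\mu)$. Young's inequality then gives $\|{}_a\mathscr{L}^{\mu}_{\xi}V\|_{L^2(\Omega)}\le \frac{(b-a)^{\mu}}{\Gamma(\mu+1)}\|V\|_{L^2(\Omega)}$, and your constant $(b-a)^{2\beta}/(|\cos(\beta\pi)|\,\Gamma(2\beta+1))$ follows.

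What your write-up adds is the hypothesis that $\Omega$ is bounded and $V$ vanishes outside it, which the paper leaves implicit. That hypothesis is genuinely needed. The paper defines the operator with limits $\pm\infty$ and poses the model on $\mathbb{R}$. There, as you correctly observe, the multiplier $-|\omega|^{-2\beta}$ makes the operator unbounded on $L^2$, so the lemma is only true in your restricted setting.
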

\begin{defn}
The following common differential equation is called the Legendre differential equation:
\begin{equation}
\frac{d}{{d\xi}}\left[ {(1 - {\xi^2})\frac{d}{{d\xi}}{P_n}(\xi)} \right] + n(n + 1){P_n}(\xi) = 0.
\end{equation}
\end{defn}
The first few Legendre polynomials solutions are:
\begin{equation*}
\begin{array}{l}
\begin{array}{*{20}{c}}
n:&{{P_n}(\xi)}\\
\hline
0:&1\\
1:&\xi\\
2:&{{\textstyle{\frac{1}{2}}}\left( {3{\xi^2} - 1} \right)}\\
3:&{{\textstyle{\frac{1}{2}}}\left( {5{\xi^3} - 3\xi} \right)}\\
4:&{{\textstyle{\frac{1}{8}}}\left( {35{\xi^4} - 30{\xi^2} + 3} \right)}\\
5:&{{\textstyle{\frac{1}{8}}}\left( {63{\xi^5} - 70{\xi^3} + 15\xi} \right)}\\
\end{array}\\
\end{array}
\end{equation*}

Let us discretize the time and place of the fractional equation. We first discretize the integral interval $ [0, 1] $ by the grid $ 0 = {\pi _0} < {\pi _0} < ... < {\pi _M} = 1 $ and take 
\begin{equation}
\Delta {\pi _j} = {\pi _j} - {\pi _{j - 1}} = \frac{1}{M} = p ,\,{\alpha _j} = \frac{{{\pi _j} - {\pi _{j - 1}}}}{2} = \frac{{2j - 1}}{{2M}},\,j = 1,2,...,M,\,\,M \in \mathbb{N}. 
\end{equation}
Thus, we can write
\begin{equation}   \label{te}
\frac{{{\partial ^\alpha }V(\xi ,t)}}{{\partial {t^\alpha }}} = \sum\limits_{j = 1}^M {{W({\alpha_j })\,\mathop {{}_0^CD}\nolimits_t^{{\alpha_j }} V(\xi ,t)} \Delta {\pi _j}}   + {\rm O}({p ^2}),
\end{equation}
where \( p \) is the step size of the discretization of the numerical integration and \( W(\alpha) \) is the basis function \( {}_0^C D_t^{\alpha} V(\xi, t) \) which is the Caputo fractional derivative of order \( \alpha \)
respect to $ t $. Let $ \Delta t=\frac{T}{M} $ is the size of the grid mesh,
$ M $ an integer is positive,  $ {t_j} = j\Delta t,\,\,j = 0,1,2,...,M $  are mesh points.
\begin{lem}(see \cite {SUN2006193}) \label{lem1-1}
Assume  $ \left( {0 < \alpha  < 1} \right), y(t) \in {C^2}[0,{t_n}]. $  So that
\begin{equation*}
{\frac{1}{{\Gamma (1 - \alpha )}}\int_0^{{t_n}} {\frac{{y'(\eta )d\eta }}{{{{({t_n} - \eta )}^\alpha }}}}  - \frac{1}{\lambda }\left[ {{a_0}y({t_n}) - \sum\limits_{l = 1}^{n - 1} {({a_{n - l - 1}} - {a_{n - l}})} y({t_l}) - {a_{n - 1}}y(0)} \right]}
\end{equation*}
\begin{equation} 
 \le \frac{1}{{\Gamma (2 - \alpha )}}\left[ {\frac{{1 - \alpha }}{{12}} + \frac{{{2^{2 - \alpha }}}}{{2 - \alpha }} - (1 + {2^{ - \alpha }})} \right]\mathop {Max}\limits_{0 \le t \le {t_n}} \left| {y''(t)} \right|{(\Delta t)^{2 - \alpha }}.
\end{equation} 
\end{lem}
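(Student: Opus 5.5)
The plan is to compare the Caputo derivative with its L1 discretization piece by piece: on each time step, replace $y$ by its linear interpolant. Write $\eta$ for the integration variable. The L1 formula is obtained from
\begin{equation*}
\frac{1}{\Gamma(1-\alpha)}\sum_{l=1}^{n}\int_{t_{l-1}}^{t_l}\frac{y(t_l)-y(t_{l-1})}{\Delta t}\,\frac{d\eta}{(t_n-\eta)^{\alpha}}.
\end{equation*}
Here we take $\lambda=\Gamma(2-\alpha)(\Delta t)^{\alpha}$ and $a_l=(l+1)^{1-\alpha}-l^{1-\alpha}$. Evaluating the kernel integrals and applying summation by parts to the differences $y(t_l)-y(t_{l-1})$ reproduces exactly the bracket in the statement. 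I would first verify this algebraic identity, since it fixes the normalization.

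The truncation error is then
\begin{equation*}
R=\frac{1}{\Gamma(1-\alpha)}\sum_{l=1}^{n}\int_{t_{l-1}}^{t_l}\Big(y'(\eta)-\frac{y(t_l)-y(t_{l-1})}{\Delta t}\Big)\frac{d\eta}{(t_n-\eta)^{\alpha}}.
\end{equation*}
On each subinterval, integrate by parts in $\eta$ against the error $e_l(\eta)=y(\eta)-\Pi_l y(\eta)$ of the linear interpolant $\Pi_l y$. Since $e_l$ vanishes at both endpoints, the boundary terms drop except the singular one at $\eta=t_n$. That term needs separate care: on the last interval, $e_n(\eta)=O((t_n-\eta))$ as $\eta\to t_n$, so $e_n(\eta)(t_n-\eta)^{-\alpha}\to0$ and the boundary term vanishes there as well.

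After integrating by parts, we get
\begin{equation*}
R=-\frac{\alpha}{\Gamma(1-\alpha)}\sum_{l}\int_{t_{l-1}}^{t_l}e_l(\eta)\,(t_n-\eta)^{-\alpha-1}\,d\eta.
\end{equation*}
Apply the interpolation error formula $|e_l(\eta)|\le\frac12\max|y''|\,(\eta-t_{l-1})(t_l-\eta)$.

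The two pieces are then handled separately.
\begin{itemize}
\item For $l\le n-1$, bound $\sum_{l\le n-1}\int (\eta-t_{l-1})(t_l-\eta)(t_n-\eta)^{-\alpha-1}\,d\eta$. Expanding the kernel, or comparing the sum with the full integral $\int_0^{t_{n-1}}$, gives a contribution of the form $c(\alpha)(\Delta t)^{2-\alpha}$. This produces the terms $\frac{2^{2-\alpha}}{2-\alpha}-(1+2^{-\alpha})$ after using the explicit integrals $\int (t_n-\eta)^{-\alpha}$ and $\int (t_n-\eta)^{1-\alpha}$ on $[t_{n-2},t_{n-1}]$ and telescoping.
\item For the last interval, treat it directly without the first integration by parts. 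Write $y'(\eta)-\frac{y(t_n)-y(t_{n-1})}{\Delta t}$ via Taylor expansion about $t_n$. This is bounded by $|y''|$ times a linear function, and integrating against $(t_n-\eta)^{-\alpha}$ gives the $\frac{1-\alpha}{12}$-type constant divided by $\Gamma(2-\alpha)$.
\end{itemize}
Collecting both pieces and using $\Gamma(2-\alpha)=(1-\alpha)\Gamma(1-\alpha)$ yields the stated constant.

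The main obstacle is producing the exact constant in the statement, not just some $C(\alpha)(\Delta t)^{2-\alpha}$. This requires choosing carefully how to split the last two intervals and doing the telescoping sum of the kernel moments precisely. If matching the constant exactly proves awkward, I would first secure the order $(\Delta t)^{2-\alpha}$ with a uniform $\alpha$-dependent constant. I would then refine the estimates on the final two subintervals, where the singularity concentrates, following Sun--Wu.
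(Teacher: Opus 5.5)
Your setup is sound. The summation-by-parts identity for the L1 weights is correct, and you rightly note that integrating by parts is legitimate even on $[t_{n-1},t_n]$, since $e_n(\eta)(t_n-\eta)^{-\alpha}\to 0$. With $M=\max_{0\le t\le t_n}|y''(t)|$ this already gives
\[
|R|\le\frac{\alpha M}{2\Gamma(1-\alpha)}\sum_{l=1}^{n}\int_{t_{l-1}}^{t_l}(\eta-t_{l-1})(t_l-\eta)(t_n-\eta)^{-\alpha-1}\,d\eta .
\]
The gap is in how you then extract the stated constant. You attach its two parts to the wrong pieces, and your treatment of the last cell cannot work. Returning to a pointwise bound on $y'(\eta)-\frac{y(t_n)-y(t_{n-1})}{\Delta t}$ over $[t_{n-1},t_n]$ throws away two things: the mean-zero cancellation on that cell, and the factor $\alpha$ that integration by parts produced. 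Any valid pointwise bound dominates the sharp one, $\frac{M}{2\Delta t}\left[(\eta-t_{n-1})^2+(t_n-\eta)^2\right]$. That sharp bound already yields a contribution of $\frac{M(\Delta t)^{2-\alpha}}{2\Gamma(2-\alpha)}\left[1-\frac{2(1-\alpha)}{2-\alpha}+\frac{2(1-\alpha)}{3-\alpha}\right]$. This tends to $\frac13$ as $\alpha\to 0$ and is about $0.37$ at $\alpha=\frac12$. The whole stated constant is only $\frac1{12}$ in the first case and about $0.22$ in the second. So that route proves only your fallback, an $O((\Delta t)^{2-\alpha})$ bound with a worse constant, which is strictly weaker than the lemma. (The paper itself gives no proof and simply cites Sun--Wu.)

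The correct bookkeeping is the reverse of yours: keep the integrated-by-parts form on every cell, and compute the two cells nearest $t_n$ exactly. Substitute $u=(t_n-\eta)/\Delta t$, or equivalently use the trapezoidal Peano kernel $\int_a^b f-\frac{b-a}{2}(f(a)+f(b))=-\frac12\int_a^b(x-a)(b-x)f''(x)\,dx$ with $f(x)=x^{1-\alpha}$ on $[0,1]$ and $[1,2]$. Either way,
\[
\frac{\alpha}{2}\sum_{l=n-1}^{n}\int_{t_{l-1}}^{t_l}(\eta-t_{l-1})(t_l-\eta)(t_n-\eta)^{-\alpha-1}\,d\eta=\frac{(\Delta t)^{2-\alpha}}{1-\alpha}\left[\frac{2^{2-\alpha}}{2-\alpha}-(1+2^{-\alpha})\right].
\]
Dividing by $\Gamma(1-\alpha)$ gives the bracketed part of the constant. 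For the cells $l\le n-2$, three facts combine. First, $\int_{t_{l-1}}^{t_l}(\eta-t_{l-1})(t_l-\eta)\,d\eta=\frac{(\Delta t)^3}{6}$. Second, $(t_n-\eta)^{-\alpha-1}\le (t_n-t_l)^{-\alpha-1}$ on the cell. Third, $\Delta t\,(t_n-t_l)^{-\alpha-1}\le\int_{t_l}^{t_{l+1}}(t_n-s)^{-\alpha-1}\,ds$. Together they bound that part of the sum by $\frac{(\Delta t)^{2-\alpha}}{6\alpha}$, which contributes exactly $\frac{1-\alpha}{12}\cdot\frac{M(\Delta t)^{2-\alpha}}{\Gamma(2-\alpha)}$. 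The shift-by-one comparison is precisely what breaks at $l=n-1$, which is why that cell must be computed exactly. It cannot be absorbed into a comparison with $\int_0^{t_{n-1}}$ as in your first bullet. Likewise, the crude bound $(\eta-t_{l-1})(t_l-\eta)\le\frac{(\Delta t)^2}{4}$ would give $\frac18$ in place of $\frac1{12}$.
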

 For convenience, we write the formula as follows:
 \begin{equation} 
 \mathop {{}_0^CD}\nolimits_{{t_n}}^\alpha  y \approx \delta _t^\alpha {y_n} = \frac{1}{\lambda }\left( {{y_n} - \sum\limits_{l = 1}^{n - 1} {({a_{n - l - 1}} - {a_{n - l}})} {y_l} - {a_{n - 1}}{y_0}} \right). \label{ta}
 \end{equation}
 From \eqref{ta}, \eqref{te} we obtain
 \begin{equation} \label{n1}
\hspace*{-2cm} \frac{{{\partial ^\alpha }V(\xi ,t)}}{{\partial {t^\alpha }}} = \sum\limits_{j = 1}^M {W({\alpha _j})\,\mathop {{}_0^CD}\nolimits_t^{{\alpha _j}} V(\xi ,t)} \Delta {\pi _j}  \approx  \sum\limits_{j = 1}^M {  \Delta {\pi _j}W({\alpha _j})\delta _t^{{\alpha _j}}{V_n}(\xi ,t)} {\rm{ }}  
 \end{equation}
 \begin{equation} \label{n2}
 = \sum\limits_{j = 1}^M {\frac{{W({\alpha _j})\Delta {\pi_j}}}{{{\lambda _j}}}\left( {{V_n} - \sum\limits_{l = 1}^{n - 1} {\left( a^{\alpha _j}_{n - l - 1} - a^{\alpha _j}_{n - l} \right){V_l} - a_{n - 1}^{{\alpha _j}}{V_0}} } \right)},
 \end{equation}
 where $ {\lambda _j} = {\left( {\Delta t} \right)^{{\alpha _j}}}\Gamma (2 - {\alpha _j}) $  and $ a_l^{{\alpha _j}} = {(l + 1)^{1 - {\alpha _j}}} - {l^{1 - {\alpha _j}}},\,\,0 \le l \le M - 1. $
\section{The LDG method}\label{Sec:2}
The LDG method converts the original equation into a lower-order derivative system to solve higher-order derivative equations.
In this section, we define three variables $ E, L, R $, and 
 defining 
 \begin{equation*}
 \displaystyle{ S\left( V^n \right)  \frac{\partial V^n}{\partial \xi} = \left(\sqrt {S(V^n)}\right) \frac{\partial \phi (V^n)}{\partial \xi},}
 \end{equation*}
  where  $\displaystyle{ \phi(V) = \int^V {\sqrt {S(V)} }\, \mathrm{d}\xi  }$, equation \eqref{eq1-1}  is rewritten as follows:
\begin{equation*}  \label{eq4-5}
\begin{array}{l}
{{\frac{{{\partial ^\alpha }V^n(\xi ,t)}}{{\partial {t^\alpha }}}} + \left( {F(V^n) - \sqrt {S\left( V^n \right)} L-\sqrt{b}E} \right)_\xi} =g(\xi,t)  , \\
L - \phi{\left( V^n \right)_\xi} = 0,\\
E={\mathcal{L} _{  \frac{\alpha-2 }{2}}}R(\xi),\\
R=\sqrt{b}\frac{\partial}{\partial \xi}V^n.
\end{array}
\end{equation*}
We seek $  (V^n(\xi,t), L(\xi,t), E(\xi,t), R(\xi,t))  $ as an approximation of $ (V^n_h(\xi,t), L_h(\xi,t), E_h(\xi,t), R_h(\xi,t)) \in \mathbb{V}_h   $\\  so that, for any $ a(\xi), b(\xi), c(\xi), d(\xi) \in \mathbb{V}^k $, we have

\begin{equation} \label{pp}
 \begin{array}{l}
\left( \sum\limits_{j = 1}^M \Delta {\pi _j}W({\alpha _j})\delta _t^{{\alpha _j}}{V^n_h} ,a(\xi) \right)_{{I_s}} + \left( \left( F(V^n_h)-\sqrt{S(V^n_h)}L_h-\sqrt b E \right)_\xi,\frac{{\partial a}}{{\partial \xi}} \right)_{{I_s}} =  \left( g(\xi,t),a(\xi) \right)_{{I_s}},\\
\left( {{L_h},b(\xi)} \right)_{{I_s}} - \left( \phi(V^n_h)_\xi,b(\xi) \right)_{{I_s}}=0,\\
\left( {E_h},c(\xi) \right)_{{I_s}} - \left( {\mathcal{L}_{\frac{\alpha-2}{2}} R_h},c(\xi) \right)_{{I_s}} = 0,\\
{\left( {{R_h},d(\xi)} \right)_{{I_s}}} - \sqrt b{\left( {\frac{{\partial {V^n_h}}}{{\partial \xi}},d(\xi)} \right)_{{I_s}}}=0,\\
{\left( {{V^n_h}(\xi,0),a(\xi)} \right)_{{I_s}}} = {\left( {{V^n_0}(\xi),a(\xi)} \right)_{{I_s}}}.
\end{array}
\end{equation} 
denote $ {\left( {V^n,a} \right)_I} = \displaystyle{\int_I {V^n(\xi) a(\xi) \mathrm{d}\xi}}  $   is defined, that is the inner product. Now suppose:
\begin{equation*}
{V^ \pm }({\varepsilon  _s}) = \mathop {\lim }\limits_{\varepsilon \to \varepsilon_s^ \pm } V(\xi),\,\,\,\,\left\{ {\left\{ V \right\}} \right\} = \frac{{{V^ + } + {V^ - }}}{2},\,\,\,\left[\kern-0.15em\left[ V 
 \right]\kern-0.15em\right] = {V^ + } - {V^ - },
\end{equation*}
We define numerical fluxes as follows:
\begin{equation*}
\hat V = {A_V}({V^ - },{V^ + }),\,\,\,\,\,\,\,\,\,{\hat F_V} = \hat F(V_h^ - ,V_h^ + ),\,\,\,\,\,\,\,\,\,\hat L = {A_L}({L^ - },{L^ + }).
\end{equation*}
For higher derivatives, we define:
\begin{equation*}
{\hat V_{l + \frac{1}{2}}} = V_{l + \frac{1}{2}}^ - ,\,\,\,\,\,\,\,{\hat L_{l + \frac{1}{2}}} = L_{l + \frac{1}{2}}^ + \,,\,\,\,\,\,\,l = 0,1,2,...,N - 1,
\end{equation*}
and
\begin{equation*}
{\hat V_{l + \frac{1}{2}}} = V_{l + \frac{1}{2}}^ + ,\,\,\,\,\,\,\,{\hat L_{l + \frac{1}{2}}} = L_{l + \frac{1}{2}}^ - \,,\,\,\,\,\,\,l = 0,1,2,...,N - 1,
\end{equation*}
By integrating by part to \eqref{pp} and the introduced numerical fluxes, we replaced the fluxes at the interfaces, which will be obtained
\begin{equation*}
\hspace*{-1cm}\left( \sum\limits_{j = 1}^M \Delta {\pi _j}W({\alpha _j})\delta _t^{{\alpha _j}}{V^n_h}, a \right)_{{I_s}} + \left( F(V^n_h)a-\sqrt{S(V^n_h)}L_h-\sqrt b E_h, a_\xi \right)_{{I_s}} + \hat{F}(V^n_h)a\Big|_{\xi_s^+}^{\xi_{s + 1}^-}
\end{equation*}
\begin{equation}
\hspace*{-0.5cm}-\sqrt{\hat{S}(V^n_h)}\hat{L}_ha\Big|_{\xi_s^+}^{\xi_{s + 1}^-} -\sqrt b \hat{E}_h a\Big|_{\xi_s^+}^{\xi_{s + 1}^-} -\sqrt b \left( \frac{\partial E_h}{\partial \xi} ,a \right)_{{I_s}} - \left(g(\xi,t),a \right)_{{I_s}}=0, \label{s1}
\end{equation}
\vspace*{0.1cm}\begin{equation}
\hspace*{3cm}\left( {L_h,b(\xi)} \right)_{{I_s}} - {\left( {\phi(V^n_h), b_\xi} \right)_{{I_s}}} + {\hat{\phi}(V^n_h)}b|_{\xi_s^ + }^{\xi_{s + 1}^ - } = 0,\label{s2}
\vspace*{0.4cm}\end{equation}
\begin{equation}
\hspace*{4cm}\left( E_h,c(\xi) \right)_{{I_s}} - \left( \mathcal{L}_{\frac{\alpha-2}{2}} R_h,c(\xi) \right)_{{I_s}} = 0,\label{s3}
\end{equation}
\begin{equation}
\hspace*{3cm}\left( {{R_h},d(\xi)} \right)_{{I_s}} - \sqrt b {\rm{ }}{\hat V^n_h}d|_{\xi_s^ - }^{\xi_{s + 1}^ + } + \sqrt b {\left( {V^n_h,{d_\xi}} \right)_{{I_s}}} = 0,\label{s4}
\vspace*{0.5cm}\end{equation}
\begin{equation}
\hspace*{5cm}\left( {{V^n_h}(\xi,0),a(\xi)} \right) - \left( {{V^n_0},a(\xi)} \right) = 0.\label{s5}
\end{equation}
The purpose is finding $\displaystyle{ \tilde{\mathbf{A}} = {\left( {\tilde V,\tilde L,\tilde E, \tilde R} \right)^T} }$ by exploiting the LDG method such that
\begin{equation*}
 \tilde V(\xi,t) = \sum\limits_{s =1}^N {\sum\limits_{p = 1}^k {{Q_{p,s}}} (t){\zeta  _{p,s}}(\xi)} \,,\,\,\,\tilde L(\xi,t) = \sum\limits_{s =1}^N {\sum\limits_{p = 1}^k {{U_{p,s}}} (t){\zeta _{p,s}}(\xi)}, 
\end{equation*}
\begin{equation*}
 \tilde E(\xi,t) = \sum\limits_{s =1}^N {\sum\limits_{p = 1}^k {{D_{p,s}}} (t){\zeta _{p,s}}(\xi)} \,,\,\,\,\tilde R(\xi,t) = \sum\limits_{s =1}^N {\sum\limits_{p = 1}^k {{K_{p,s}}} (t){\zeta _{p,s}}(\xi)}, 
\end{equation*}
where they are functions satisfying \eqref{s1}-\eqref{s4}
for all $ a,b, c, d \in {\mathcal{P}^k}({I_s}) $, $ s \in \{1,2,...,N\} $
and we have the initial conditions for $ V, L, R $ and $ E $ from \eqref{s5}.

\section{Stability }\label{Sec:3}
This section shows that the solution of nonlinear equation \eqref{eq1-1} by the LDG method is stable.
We define:
\begin{equation*}
\hspace*{-1.5cm}
\mathscr{B}  (V^n, L, E, R; a, b, c, d) = 
\int_0^T \left( \sum\limits_{s = 1}^N \left( \sum\limits_{j = 1}^M \Delta {\pi _j}W({\alpha _j})\delta _t^{{\alpha _j}}{V^n_h},a \right)_{I_s} \right) \mathrm{d}t 
+ \int_0^T \left( \sum\limits_{s = 1}^{N-1} \left( \hat F a - \sqrt {\hat S} \hat L a - \sqrt b \hat{E} a \right) \Big|_{\xi_s^+}^{\xi_{s + 1}^-} \right) \mathrm{d}t 
\end{equation*}
\begin{equation*}
\hspace*{2.3cm}-\int_0^T{\sum\limits_{s = 1}^{N} \left(g,a\right)} + \sqrt b \int_0^T {\sum\limits_{s = 1}^N {{{(E,{a_\xi})}_{{I_s}}}} } \mathrm{d}t - \int_0^T {{{\sum\limits_{s = 1}^N {\left( {F(V^n) - \sqrt {S(V^n)} L,\frac{\partial a}{\partial \xi} } \right)}_{I_s} }}} \mathrm{d}t
\end{equation*}
\begin{equation*}
\hspace*{1.3cm}+ \int_0^T {{{\sum\limits_{s = 1}^N {\left( {L,b} \right)}_{I_s} }}} {\rm{d}}t - \int_0^T {{{\sum\limits_{s = 1}^N {\left( {\phi(V^n),{\frac{\partial b}{\partial \xi} }} \right)}_{I_s} }}} {\rm{d}}t + \int_0^T {\sum\limits_{s = 1}^{N-1} {\left( {\hat \phi(V^n)b} \right)} } |_{\xi_s^ + }^{\xi_{s + 1}^ - }{\rm{d}}t
\end{equation*}
\begin{equation} \label{sa}
\,\,\,+ \int_0^T {{{\sum\limits_{s = 1}^N {\left( {E,c} \right)}_{I_s} }}} {\rm{d}}t - \int_0^T {{{\sum\limits_{s = 1}^N {\left( {\mathcal{L} _{  \frac{\alpha-2 }{2}}}R,c \right)}_{I_s} }}}\mathrm{d}t  + \int_0^T {{{\sum\limits_{s = 1}^N {\left( {R, d} \right)}_{I_s} }}} {\rm{d}}t
\end{equation}
\begin{equation*} 
\hspace*{-2.2cm}+ \int_0^T {{{\sum\limits_{s = 1}^N {\sqrt b \left( {V^n,{\frac{\partial d}{\partial \xi}}} \right)}_{I_s} }}} {\rm{d}}t + \int_0^T {\sum\limits_{s = 1}^{N-1} {\sqrt b \hat V^n \, d} } |_{\xi_s^ + }^{\xi_{s + 1}^ - }{\rm{d}}t.
\end{equation*}

Notice \( \mathbb{B} (V^n, L, E, R, a, b, c, d) =0 \) for any $ \left( {a, b, c, d} \right) $ if $ \left( {V^n, L, E, R} \right) $ is a solution. By considering the fluxes,
\begin{equation*}
 {{\hat V^n}_{s + 1}} = (V^n)_{s + 1}^ -,\quad \hat{L}_{s+1}=L^+_{s+1}, \quad
\hat{E}_{s+1}=E^+_{s+1}, \quad \hat{\phi}(V^n)_{s+1} =\phi((V^n)^+_{s+1}), \quad 1\le s \le N - 1,
\end{equation*}
in boundary conditions, we define the following flux:
\begin{equation*}
{\hat V^n_{N + 1}} = V^n(b,t),\quad \quad {\hat E_{N + 1}} = E_{N + 1}^ -  + \frac{\beta }{h}[{V^n_{N + 1}}].
\end{equation*} 
 So we can write:
\begin{equation*} 
\hspace{-3cm}\mathscr{B}(V^n, L, E, R; a, b, c, d) = \int_0^T {\sum\limits_{s = 1}^N \left( \sum\limits_{j = 1}^M {  \Delta {\pi _j}W({\alpha _j})\delta _t^{{\alpha _j}}{V^n}} {\rm{ }},a \right)_{{I_s}}} {\rm{d}}t - \int_0^T {\sum\limits_{s = 1}^N {\left( {F(V^n),{\frac{\partial a}{\partial \xi}}} \right)}_{I_s} {\rm{d}}t }
\end{equation*}
\begin{equation*}
+ \sqrt b \int_0^T {\sum\limits_{s = 1}^N {{{\left(E,{\frac{\partial a}{\partial \xi}}\right)_{I_s}}}} }\mathrm{d}t + {\int_0^T {\sum\limits_{s = 1}^N {\left( {\sqrt {S(V^n)} L,{\frac{\partial a}{\partial \xi}}} \right)}_{I_s} }}\mathrm{d}t 
\end{equation*}
\begin{equation*}
\hspace*{1.2cm}+ {\int_0^T {\sum\limits_{s = 1}^N {\left( {L,b} \right)}_{I_s} } }\mathrm{d}t + {\int_0^T {\sum\limits_{s = 1}^N {\left( g, a \right)}_{I_s} } }\mathrm{d}t - {\int_0^T {\sum\limits_{s = 1}^N {\left( {\phi(V^n),{\frac{\partial b}{\partial \xi}}} \right)_{I_s}} } }\mathrm{d}t
\end{equation*}
\begin{equation*}
\hspace*{2cm}  + {\int_0^T {\sum\limits_{s = 1}^N {\left( {E ,c} \right)_{I_s}} }}\mathrm{d}t - \int_0^T {\sum\limits_{s = 1}^N {\left( {\mathcal{L} _{\frac{\alpha-2}{2}}R,c} \right)_{I_s}} }\mathrm{d}t + {\int_0^T {\sum\limits_{s = 1}^N {\left( {R ,d} \right)_{{I_s}}} } }\mathrm{d}t
\end{equation*}
\begin{equation*}
\hspace*{-0.5cm} + {\int_0^T {\sum\limits_{s = 1}^N {\sqrt b \left( {V^n,{\frac{\partial d}{\partial \xi}}} \right)_{{I_s}}} } }\mathrm{d}t - \int_0^T {\sum\limits_{s = 1}^{N - 1} {{{\hat F}_{_{s + 1}}}{{\left[\kern-0.15em\left[ {a} 
 \right]\kern-0.15em\right]}_{s + 1}}} } \mathrm{d}t
\end{equation*}
\begin{equation*}
\hspace*{0.2cm}+\int_0^T {\sum\limits_{s = 1}^{N - 1} {{{(\sqrt {\hat S} \hat L)}_{s + 1}}{{\left[\kern-0.15em\left[ {a} 
 \right]\kern-0.15em\right]}_{s + 1}}} } \mathrm{d}t - \int_0^T {\sum\limits_{s = 1}^{N - 1} {{{\hat \phi}_{s + 1}}{{\left[\kern-0.15em\left[ {b} 
 \right]\kern-0.15em\right]}_{r + 1}}} } \mathrm{d}t
\end{equation*}
\begin{equation*}
\hspace*{0.3cm} + \sqrt b \int_0^T {\sum\limits_{s = 1}^{N - 1} {{{\hat E}_{s + 1}}{{\left[\kern-0.15em\left[ {a} 
 \right]\kern-0.15em\right]}_{s + 1}}} } \mathrm{d}t - \int_0^T {\sum\limits_{s = 1}^{N - 1} {\sqrt b {{\hat V^n}_{s + 1}}{{\left[\kern-0.15em\left[ {d} 
 \right]\kern-0.15em\right]}_{r + 1}}} } \mathrm{d}t
\end{equation*}
\begin{equation*}
\hspace*{3.45cm}- \int_0^T {\left( {{{\hat F}_1}a_1^ +  - {{\hat F}_{N+1}}a_{N+1}^ - } \right)} \mathrm{d}t + \int_0^T {\left( {\sqrt {{{\hat S}_1}} {{\hat L}_1}a_1^ +  - \sqrt {{{\hat S}_{N+1}}} {{\hat L}_{N+1}}a_{N+1}^ - } \right)} \mathrm{d}t
\end{equation*}
\begin{equation*}
\hspace*{2.45cm} - \int_0^T {\left( {{{\hat \phi}_1}b_1^ +  - {{\hat \phi}_{N+1}}b_{N+1}^ - } \right)} \mathrm{d}t + \int_0^T {\left( {\sqrt b {{\hat E}_1}a_1^ +  - \sqrt b {{\hat E}_{N+1}}a_{N+1}^ - } \right)} \mathrm{d}t
\end{equation*}
\begin{equation}  \label{zeyad}
\hspace*{-2.7cm} - \int_0^T {\left( {\sqrt b {{\hat V^n}_1}d_1^ +  - {{\hat V^n}_{N+1}}d_{N+1}^ - } \right)\mathrm{d}t}.
\end{equation}
\begin{lem} \label{Lemma3}
By setting $ (a, b, c, d)=(V^n , L,-R, E) $ in \eqref{zeyad} and defining $\displaystyle{ \psi (V^n)=\int^{V^n} F(V^n)\, \rm{d}V^n} $, we achieve the following result
\begin{equation*}
\hspace{-3.4cm}\mathscr{B}(V^n, L, E, R; V^n, L, - R, E) = \int_0^T {\sum\limits_{s = 1}^N \left( \sum\limits_{j = 1}^M {  \Delta {\pi_j}W({\alpha _j})\delta _t^{{\alpha _j}}{V^n}} {\rm{ }},V^n \right)_{{I_s}} {\rm{d}}t} + \int_0^T {\sum\limits_{s = 1}^N {(L,L)}_{I_s} {\rm{d}}t}
\end{equation*}
\begin{equation*}
\hspace*{2.8cm} + \int_0^T{\sum\limits_{s = 1}^N \left( \mathcal{L}{\frac{\alpha-2}{2}}(R, R) \right){I_s} \mathrm{d}t} + \int_0^T {\frac{\sqrt b}{h} \beta \left( V^{n^-}_{N + 1} \right)^2 \mathrm{d}t}
\end{equation*}
\begin{equation*}
\hspace*{3cm}+ \int_0^T {\left( {\psi {{(V^n)}_1} - \psi{{(V^n)}_{N + 1}} - {{(\hat FV^n)}_1} + {{(\hat FV^n)}_{N + 1}}} \right)\mathrm{d}t}
\end{equation*}
\begin{equation*}
\hspace*{0.75cm} + \int_0^T {\sum\limits_{s = 1}^{N-1} {\left( {{{\left[\kern-0.15em\left[ {\psi(V^n)} 
 \right]\kern-0.15em\right]}_{s + 1}} - \hat F{{\left[\kern-0.15em\left[ {V^n} 
 \right]\kern-0.15em\right]}_{s + 1}}} \right)} } \mathrm{d}t.
\end{equation*}
\end{lem}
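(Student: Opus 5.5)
The identity comes from substituting $(a,b,c,d)=(V^n,L,-R,E)$ into \eqref{zeyad} and sorting the terms into three groups: cancellations among volume terms, telescoping of interelement terms via the chosen fluxes, and recognition of $\psi$ through the fundamental theorem of calculus on each cell.

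\emph{Volume terms.} The time-fractional term gives $\int_0^T\sum_s(\sum_j\Delta\pi_jW(\alpha_j)\delta_t^{\alpha_j}V^n,V^n)_{I_s}\,dt$ directly, and $(L,b)$ becomes $(L,L)$. The $(E,c)$ term gives $-(E,R)$, while the $(R,d)$ term gives $(R,E)$, so these two cancel. The term $-(\mathcal L_{\frac{\alpha-2}{2}}R,c)$ becomes $(\mathcal L_{\frac{\alpha-2}{2}}R,R)$, which is the third term of the statement.

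Two pairings remain that must cancel:
\begin{itemize}
\item $\sqrt b(E,V^n_\xi)$ against $\sqrt b(V^n,E_\xi)$;
\item $(\sqrt{S(V^n)}L,V^n_\xi)$ against $-(\phi(V^n),L_\xi)$.
\end{itemize}
For the second pair, use $\phi(V^n)_\xi=\sqrt{S(V^n)}V^n_\xi$. Integrating by parts on each $I_s$ turns it into $(\sqrt S L,V^n_\xi)+(\phi_\xi,L)$ minus the boundary values $\phi L|_{\xi_s^+}^{\xi_{s+1}^-}$. The volume pieces combine with the signs as written, and I expect them to vanish up to the boundary contributions. The first pair is treated the same way, leaving $\sqrt b\,EV^n|_{\xi_s^+}^{\xi_{s+1}^-}$. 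For the convection term, $-(F(V^n),V^n_\xi)_{I_s}=-\int_{I_s}\frac{d}{d\xi}\psi(V^n)\,d\xi$, which produces cell-boundary values of $\psi$.

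\emph{Interface terms.} Summing the cellwise boundary values over $s$ rewrites them as jumps at the interior nodes $\xi_{s+1}$, $1\le s\le N-1$. The identity $[\![uv]\!]=u^+[\![v]\!]+v^-[\![u]\!]$ combined with the alternating flux choices cancels every interior term involving $L,\phi,E,V^n$:
\begin{itemize}
\item $\hat L=L^+$ and $\hat\phi=\phi((V^n)^+)$ kill the $L$--$\phi$ terms;
\item $\hat E=E^+$ and $\hat V^n=(V^n)^-$ kill the $E$--$V^n$ terms.
\end{itemize}
What survives at interior nodes is $\sum_s([\![\psi(V^n)]\!]_{s+1}-\hat F[\![V^n]\!]_{s+1})$ from the convection part. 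The end nodes $\xi_1,\xi_{N+1}$ contribute $\psi(V^n)_1-\psi(V^n)_{N+1}-(\hat FV^n)_1+(\hat FV^n)_{N+1}$. The only other boundary contribution is the penalty in $\hat E_{N+1}=E^-_{N+1}+\frac{\beta}{h}[V^n_{N+1}]$, which gives $\frac{\sqrt b}{h}\beta(V^{n-}_{N+1})^2$ under the convention that the jump at the right boundary reduces to $V^{n-}_{N+1}$.

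\emph{Main obstacle.} The delicate step is the bookkeeping of boundary and interface signs. This means checking that the $\sqrt{\hat S}\hat L$ and $\hat\phi$ flux terms really cancel against the integrated-by-parts pieces, which requires interpreting $\sqrt{\hat S}\hat L[\![V^n]\!]$ consistently with $\hat\phi[\![L]\!]$. It also means checking that all boundary terms at $\xi_1,\xi_{N+1}$ other than the $\psi$, $\hat F$ and penalty terms cancel. I would first verify this on a single interface with explicit traces and then sum.
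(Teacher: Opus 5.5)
Your route is the same as the paper's: substitute $(a,b,c,d)=(V^n,L,-R,E)$ into \eqref{zeyad}, integrate by parts on each cell, telescope the interior traces through the fluxes, and absorb the convection term into $\psi$. Your $E$--$R$ cancellation, the $\mathcal{L}_{\frac{\alpha-2}{2}}$ term and the $\psi$/$\hat F$ bookkeeping are correct. The step that fails is the $L$--$\phi$ cancellation, for two concrete reasons.

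First, consider the signs printed in \eqref{zeyad}, namely $-(\phi(V^n),b_\xi)$ and $-\hat\phi[\![b]\!]$. With these, your own expression is $(\sqrt{S(V^n)}L,V^n_\xi)+(\phi(V^n)_\xi,L)=2(L,\phi(V^n)_\xi)$. This is not a boundary term and does not vanish. The pair collapses to $\phi(V^n)L|_{\xi_s^+}^{\xi_{s+1}^-}$ only if you use the equation obtained by integrating $L=\phi(V^n)_\xi$ by parts, $(L,b)_{I_s}+(\phi(V^n),b_\xi)_{I_s}-\hat\phi\,b|_{\xi_s^+}^{\xi_{s+1}^-}=0$, which contributes $+\hat\phi[\![b]\!]$ at interior nodes. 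The paper's proof tacitly switches to these signs in its interface identity.

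Second, even with corrected signs, the interior contribution at $\xi_{s+1}$ is $-[\![\phi(V^n)L]\!]+\sqrt{\hat S}\hat L[\![V^n]\!]+\hat\phi[\![L]\!]$. With $\hat L=L^+$ and $\hat\phi=\phi((V^n)^+)$, and even granting $\sqrt{\hat S}[\![V^n]\!]=[\![\phi(V^n)]\!]$, this equals $[\![\phi(V^n)]\!]_{s+1}[\![L]\!]_{s+1}$. That residual is sign-indefinite and absent from the statement. Your pairing is not alternating, while the identity $[\![uv]\!]=u^+[\![v]\!]+v^-[\![u]\!]$ you invoke pairs $L^+$ with $\phi^-$. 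You must take $\hat\phi=\phi((V^n)^-)=\phi(\hat V^n)$. You must also \emph{define} $\sqrt{\hat S}:=[\![\phi(V^n)]\!]/[\![V^n]\!]$, the standard LDG choice for nonlinear diffusion. This is the ``consistent interpretation'' you left open, and nothing in the paper supplies it. The paper makes the same flux choice and asserts the same identity, so its proof does not close this gap either.

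Three smaller points need the same care. (i) The interior term $\sqrt b\,\hat V^n[\![d]\!]$ must carry a plus sign, which is what integrating $R=\sqrt b\,V^n_\xi$ by parts on each cell produces, not the minus printed in \eqref{zeyad}. With the minus, the $E$--$V^n$ terms leave $-2\sqrt b\,(V^n)^-[\![E]\!]$. With the plus, your cancellation via $[\![EV^n]\!]=E^+[\![V^n]\!]+(V^n)^-[\![E]\!]$ is correct, and more careful than the paper, which writes $(V^n)^+[\![E]\!]$. (ii) \eqref{zeyad} contains $\int_0^T\sum_s(g,V^n)_{I_s}\,dt$, which your inventory drops and the statement omits, so the identity requires $g=0$. (iii) At $\xi_{N+1}$, the cellwise boundary value $\sqrt b\,(EV^n)^-_{N+1}$ cancels the $E^-_{N+1}$ part of $-\sqrt b\,\hat E_{N+1}(V^n)^-_{N+1}$. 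This leaves $-\frac{\sqrt b\beta}{h}[\![V^n]\!]_{N+1}(V^n)^-_{N+1}$, which is the positive penalty in the statement only if $[\![V^n]\!]_{N+1}=-(V^n)^-_{N+1}$; your convention reverses its sign. The remaining endpoint terms ($\hat V^n_1E_1^+$, $\hat V^n_{N+1}E^-_{N+1}$, and the endpoint $\hat L$, $\hat\phi$, $\sqrt{\hat S}$ contributions) vanish only once you fix homogeneous boundary data and endpoint fluxes, which neither you nor the paper specifies.
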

\begin{proof}
If we suppose $ (a, b, c, d)=(V^n, L,-R, E) $ in \eqref{zeyad}, and apply the integration by parts formula
\begin{equation*}
{\left(\phi(V),\frac{\partial L}{\partial \xi}\right)}_{I_s}+{\left( \frac{\partial \phi(V^n)}{\partial \xi},L\right)}_{I_s}=\phi(V^n)L|_{\xi_s^+}^{\xi_{s+1}^-},
\end{equation*}
\begin{equation*}
\hspace*{0.75cm}{{{\left(E,{\frac{\partial V^n}{\partial \xi}}\right)}_{{I_s}}}}+{{{\left(\frac{\partial E}{\partial \xi},{V^n}\right)}_{{I_s}}}}=(E V^n)|_{\xi_s^+}^{\xi_{s+1}^-},
\end{equation*}
the interface condition can be obtained 
\begin{align*}
&\sum\limits_{s = 1}^N {{{\left( {\sqrt {S(V^n)} L,{\frac{\partial V^n}{\partial \xi}}} \right)}_{{I_s}}}} + \sum\limits_{s = 1}^N {{{\left( {\phi(V^n),{\frac{\partial L}{\partial \xi}}} \right)}_{{I_s}}}}  \\
&+ \sum\limits_{s = 1}^{N - 1} {\hat \phi(V^n){{\left[\kern-0.15em\left[ {L} \right]\kern-0.15em\right]}_{{{s + 1}}}}} + \sum\limits_{s = 1}^{N - 1} {{{(\sqrt {\hat S} \hat L)}_{s + 1}}{{\left[\kern-0.15em\left[ {a} \right]\kern-0.15em\right]}_{s + 1}}} \\
&= \phi(V^{n+}_1)L_1^+ - \phi(V^{n^-}_{N+1})L_{N+1}^-,
\end{align*}
\begin{align*}
& {\sum\limits_{s = 1}^N {{{\sqrt{b}\left(E,{\frac{\partial V^n}{\partial \xi}}\right)}_{{I_s}}}} } + {\sum\limits_{s = 1}^N {{{\sqrt{b}\left(\frac{\partial E}{\partial \xi},{V^n}\right)}_{{I_s}}}} } \\
&+  {\sum\limits_{s = 1}^{N-1} \sqrt{b}{ E_{s+1}^+{{\left[\kern-0.15em\left[ {V^n} 
 \right]\kern-0.15em\right]}_{s + 1}}} + {\sum\limits_{s = 1}^{N-1} \sqrt{b} V^{n+}_{s+1}{{\left[\kern-0.15em\left[ {E}
 \right]\kern-0.15em\right]}_{s + 1}}} } \\
&=\sqrt b (V^n_1)^+E_1^+-\sqrt b (V^n_{N+1})^-E_{N+1}^- .
\end{align*}
Then we have
\begin{equation*}
\hspace{-.7cm}\mathscr{B}(V^n, L, E, R;V^n, L, -R, E) = \int_0^T \sum\limits_{s = 1}^N \left( \sum\limits_{j = 1}^M \Delta \pi _j W(\alpha j) \delta t^{\alpha j} V^n, V^n \right){I_s} \mathrm{d}t - \int_0^T \sum\limits{s = 1}^N \left( F(V^n),\frac{\partial V^n}{\partial \xi} \right){I_s} \mathrm{d}t
\end{equation*}
\begin{equation*}
\hspace*{5cm}
+ \int_0^T \sum\limits_{s = 1}^N \left( L, L \right)_{I_s} \mathrm{d}t  
+ \int_0^T \sum\limits_{s = 1}^N \left( \mathcal{L}_{\frac{\alpha-2}{2}} R, R \right)_{I_s} \mathrm{d}t 
- \int_0^T \sum\limits_{s = 1}^{N - 1} \hat F_{s + 1} \left[\kern-0.15em\left[ V^n \right]\kern-0.15em\right]_{s + 1} \mathrm{d}t  
\end{equation*}
\begin{equation} \label{f1}
\quad \quad \quad \quad \,\,\,\, \quad+ \int_0^T \frac{\sqrt b}{h} \varepsilon \left( V^{n-}_{N + 1} \right)^2 \mathrm{d}t - \int_0^T \left( \hat F_1 \hat V^n_1 - \hat F_{N + 1} \hat V^n_{N + 1} \right) \mathrm{d}t.
\end{equation}
Define $\displaystyle { \psi(V^n)=\int ^{V^n} F(V^n)\, \rm{d}V^n }$, then
\begin{equation} \label{f2}
{\sum\limits_{s = 1}^N {\left( {F(V^n),{\frac{\partial V^n}{\partial \xi} }} \right)} _{{I_s}}} = \sum\limits_{s = 1}^N {\psi (\xi)|_{\xi_s^ + }^{\xi_{s + 1}^ - }}  =  - \sum\limits_{s = 1}^{N-1} {{{\,\psi \left[\kern-0.15em\left[  (V^n) 
 \right]\kern-0.15em\right]}_{s + 1}}}  - \psi{(V)^n_1} + \psi{(V)^n_{N + 1}}.
\end{equation}
Finally, using equations \eqref{f1} and \eqref{f2} proves the Lemma.
\end{proof}
\begin{thm}
The semi-discrete scheme \eqref{s1}-\eqref{s5} is stable, and $\forall T>0 $ we have $ \left\| {{V^n_h}(\xi,T)} \right\| \le \left\| {{V^n_0}(\xi)} \right\| $.
\end{thm}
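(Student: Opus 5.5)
The plan is the standard LDG energy argument, built on Lemma~\ref{Lemma3}. Since $(V^n_h,L_h,E_h,R_h)$ solves \eqref{s1}--\eqref{s5}, we have $\mathscr{B}(V^n_h,L_h,E_h,R_h;a,b,c,d)=0$ for every admissible test quadruple. We therefore take $(a,b,c,d)=(V^n_h,L_h,-R_h,E_h)$ and use the identity of Lemma~\ref{Lemma3}. For the stability statement we set $g\equiv0$, since the estimate $\|V^n_h(T)\|\le\|V^n_0\|$ is a homogeneous one. This yields a sum of terms that should all be nonnegative except the time-fractional term. The inequality is then extracted from that term.

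\textbf{Step 1: sign of the non-temporal terms.} $\int_0^T\sum_s(L_h,L_h)_{I_s}\,dt\ge0$ trivially, and the boundary penalty $\int_0^T\frac{\sqrt b}{h}\beta (V^{n-}_{N+1})^2\,dt\ge0$ since $b\ge0$ and $\beta>0$. For the fractional term $\int_0^T\sum_s(\mathcal L_{\frac{\alpha-2}{2}}R_h,R_h)_{I_s}\,dt$ we appeal to the positivity of the fractional integral operator of negative order. Via Fourier transform, $(\mathcal L_{-\gamma}w,w)=\int|\omega|^{-2\gamma}|\hat w|^2\,d\omega\cdot c\ge0$, with the normalisation $-1/(2\cos(\gamma\pi))$ giving the correct sign. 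Lemma~\ref{lem20} guarantees the term is finite.

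For the convection terms, each interior contribution $[\![\psi(V^n_h)]\!]-\hat F[\![V^n_h]\!]=\int_{V^-}^{V^+}(F(s)-\hat F(V^-,V^+))\,ds$ is $\ge0$ for any monotone (E-)flux $\hat F$. We will assume such a flux, for example Godunov or Lax--Friedrichs. The two boundary contributions are handled the same way, using the boundary flux choice.

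\textbf{Step 2: the discrete Caputo term.} From the above we get
$$\sum_{j=1}^M \frac{W(\alpha_j)\Delta\pi_j}{\lambda_j}\Big(\|V^n_h\|^2-\sum_{l=1}^{n-1}(a^{\alpha_j}_{n-l-1}-a^{\alpha_j}_{n-l})(V^l_h,V^n_h)-a^{\alpha_j}_{n-1}(V^0_h,V^n_h)\Big)\le0,$$
working at each time level rather than integrating in $t$. Cauchy--Schwarz on each inner product gives $(V^l_h,V^n_h)\le\|V^l_h\|\|V^n_h\|$. We use that $a_l^{\alpha_j}$ is positive and decreasing, so the weights $a_{n-l-1}-a_{n-l}$ are positive and, together with $a_{n-1}$, sum to $a_0=1$. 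Dividing by $\|V^n_h\|$ then yields
$$\sum_j c_j\|V^n_h\|\le\sum_j c_j\Big(\sum_l(a^{\alpha_j}_{n-l-1}-a^{\alpha_j}_{n-l})\|V^l_h\|+a^{\alpha_j}_{n-1}\|V^0_h\|\Big),$$
where $c_j=W(\alpha_j)\Delta\pi_j/\lambda_j\ge0$, assuming $W\ge0$.

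\textbf{Step 3: induction.} We prove $\|V^n_h\|\le\|V^0_h\|$ by strong induction on $n$. Assuming it for $l<n$, the right-hand side above is at most $\sum_jc_j\|V^0_h\|$, by the telescoping identity $\sum_{l=1}^{n-1}(a_{n-l-1}-a_{n-l})+a_{n-1}=a_0=1$. Taking $n$ with $t_n=T$, and noting $\|V^0_h\|\le\|V^n_0\|$ because \eqref{s5} defines $V^0_h$ as the $L^2$ projection, gives the claim.

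\textbf{Main obstacle.} The hardest step is justifying nonnegativity of $(\mathcal L_{\frac{\alpha-2}{2}}R_h,R_h)$, together with the sign of the boundary and flux terms in Lemma~\ref{Lemma3}. Those boundary terms require the E-flux and boundary-flux conventions to be fixed consistently. I would first try to settle the fractional term by the Fourier/Plancherel argument on $\mathbb R$, extending $R_h$ by zero. I would handle the flux terms by the monotone-flux integral representation above.
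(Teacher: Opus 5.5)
Your proposal is correct and follows essentially the paper's route. That route is the energy identity of Lemma~\ref{Lemma3} with test functions $(V^n_h,L_h,-R_h,E_h)$; the flux, penalty, $(L_h,L_h)$ and fractional-integral terms are dropped as nonnegative, the L1 formula \eqref{n2} is expanded, Cauchy--Schwarz is applied, and induction on $n$ uses $\sum_{l=1}^{n-1}(a^{\alpha_j}_{n-l-1}-a^{\alpha_j}_{n-l})+a^{\alpha_j}_{n-1}=a^{\alpha_j}_0=1$. Your version is cleaner than the paper's: it makes $g\equiv0$ and the E-flux explicit, avoids the spurious ``$c$ small'' assumption, and supplies the final step $\|V^0_h\|\le\|V^n_0\|$ via the $L^2$ projection, which the paper omits.

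One caution on your main obstacle. With the normalisation $-1/(2\cos(\gamma\pi))$ exactly as printed, the symbol of $\mathcal{L}_{-\gamma}$ is $-|\omega|^{-2\gamma}$. So nonnegativity of $(\mathcal{L}_{\frac{\beta-2}{2}}R_h,R_h)$ (the paper's $\frac{\alpha-2}{2}$ is a typo) cannot come from that normalisation. It must come from the sign convention forced by \eqref{1-2}, namely symbol $|\omega|^{\beta-2}>0$, which is also what the paper tacitly uses.
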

\begin{proof}
Using the uniformity property of the flux function $ \hat F\left({(V^n)^ - },{(V^n)^ + }\right) $ 
we have 
\begin{equation*}
\psi \left[\kern-0.15em\left[ {\xi} 
 \right]\kern-0.15em\right]{_{s + 1}} - F{\left[\kern-0.15em\left[ {\xi} 
 \right]\kern-0.15em\right]_{s + 1}} > 0,\,\,1 \le s \le N - 1
\end{equation*}
Using  Galerkin orthogonality,\\ $ \mathscr{B}(V^n_h, L_h, E_h, R_h,V^n_h, L_h, - R_h, E_h) =0$, Lemma \ref{Lemma3} yields
\begin{equation*}
\int_0^T \sum\limits_{s = 1}^N \left( \sum\limits_{j = 1}^M \Delta {\pi _j}W({\alpha _j})\delta _t^{{\alpha _j}}{V^n}, V^n \right)_{I_s} \mathrm{d}t  + \int_0^T \sum\limits_{s = 1}^N (L,L)_{I_s} \mathrm{d}t + \int_0^T \sum\limits_{s = 1}^N \left( \mathcal{L}_{\frac{\alpha-2}{2}} R, R \right)_{I_s} \mathrm{d}t 
\end{equation*}
\begin{equation*}
 + \int_0^T {\frac{{\sqrt b }}{h}} \varepsilon \left( {V_{N + 1}^ - } \right)^2\mathrm{d}t+ \int_0^T {\left( {\psi{{(V)}_1} - \psi{{(V)}_{N + 1}} - {{(\hat F V)}_1} + {{(\hat F V)}_{N + 1}}} \right)\mathrm{d}t}  \le 0.
\end{equation*}
On the other hand,  according to equations \eqref{n1}, \eqref{n2} we have
\begin{equation*}
\left( \sum\limits_{j = 1}^M \frac{{W({\alpha _j})\Delta {\pi _j}}}{{\lambda _j}} V^n, V_h^n \right) \le \left( \sum\limits_{j = 1}^M \frac{{W({\alpha _j})\Delta {\pi _j}}}{{\lambda _j}} \sum\limits_{l = 1}^{n - 1} \left( a_{n - l - 1}^{{\alpha _j}} - a_{n - l}^{{\alpha _j}} \right) V_l, V_h^n \right)
\end{equation*}
\begin{equation*}
 \hspace*{3cm} + \left( \sum\limits_{j=1}^M \frac{{W({\alpha _j})\Delta {\pi _j}}}{{\lambda _j}} a_{n - 1}^{{\alpha _j}} V_0^n, V_h^n \right)
\end{equation*}
By considering Cauchy-Schwarz inequality, we have
\begin{equation*}
\left\| {V_h^n} \right\|_{{L^2}(\Omega )}^2 \le {c_1} {\sum\limits_{j = 1}^M {\frac{{W({\alpha _j})\Delta {\pi _j}}}{{{\lambda _j}}}Q} \left( {\sum\limits_{l = 1}^{n - 1} {\left( {a_{n - l - 1}^{{\alpha _j}} - a_{n - l}^{{\alpha _j}}} \right)} } \right)} \left\| {V_h^l} \right\|_{{L^2}(\Omega )}^2\left\| {V_h^n} \right\|_{{L^2}(\Omega )}^2
\end{equation*}
\begin{equation}
 + {c_2} {\sum\limits_{j = 1}^M {\frac{{W({\alpha _j})\Delta {\pi _j}}}{{{\lambda _j}}}Q} } a_{n - 1}^{{\alpha _j}}\left\| {V_h^0} \right\|_{{L^2}(\Omega )}^2\left\| {V_h^n} \right\|_{{L^2}(\Omega )}^2
\end{equation}
where 
\begin{equation*}
Q = \left(\sum\limits_{j = 1}^M {\frac{{W({\alpha _j})\Delta {\pi _j}}}{{{\lambda _j}}}} \right)^{-1}
\end{equation*}
Assuming $  c$ is very small such that $ 1 - cQ > 0$, we have
\begin{equation} \label{n3}
\left\| {V_h^n} \right\|_{{L^2}(\Omega )}\le C\left(  {\sum\limits_{j = 1}^M {\frac{{W({\pi _j})\Delta {\pi _j}}}{{{\lambda _j}}}Q} \sum\limits_{l = 1}^{n - 1} {\left( {a_{n - l - 1}^{{\alpha _j}} - a_{n - l}^{{\alpha _j}}} \right)} }  \right)\left\| {V_h^l} \right\|_{{L^2}(\Omega )}^{}
\end{equation}
The theorem is proved for $ n=0$. Suppose that it is valid for $ n=1,2,3,...,m-1. $ Then, by \eqref{n3},we can write:
\begin{equation*}
\left\| {V_h^m} \right\|_{{L^2}(\Omega )} \le C\left( {\sum\limits_{j = 1}^M {\frac{{W({\alpha _j})\Delta {\pi _j}}}{{{\lambda _j}}}Q} \sum\limits_{l = 1}^{m - 1} {\left( {a_{n - l - 1}^{{\alpha _j}} - a_{n - l}^{{\alpha _j}}} \right)} } \right)\left\| {V_h^l} \right\|_{{L^2}(\Omega )}^{} + \sum\limits_{j = 1}^{m - 1} {\frac{{W({\alpha _j})\Delta {\pi _j}}}{{{\lambda _j}}}Q} a_{n - 1}^{{\alpha _j}}\left\| {V_h^0} \right\|_{{L^2}(\Omega )}^{}
\end{equation*}
\begin{equation}
\hspace*{2.3cm} \le C\left( {\sum\limits_{j = 1}^M {\frac{{W({\alpha _j})\Delta {\pi _j}}}{{{\lambda _j}}}Q} \sum\limits_{l = 1}^{m - 1} {\left( {a_{n - l - 1}^{{\alpha _j}} - a_{n - l}^{{\alpha _j}}} \right)} } \right)\left\| {V_h^0} \right\|_{{L^2}(\Omega )}^{} + \sum\limits_{j = 1}^{m - 1} {\frac{{W({\alpha _j})\Delta {\pi _j}}}{{{\lambda _j}}}Q} a_{n - 1}^{{\alpha _j}}\left\| {V_h^0} \right\|_{{L^2}(\Omega )}^{}
\end{equation}
\begin{equation*}
\hspace*{-10.80cm}=\left\| {V_h^0} \right\|_{{L^2}(\Omega )}
\end{equation*}
\end{proof}
\section{Error estimation}\label{Sec:4}
To estimate the error, we assume  $ F = 0 $, $ S  \equiv 1 $ and $ \phi(V)=V $. For fractional diffusion, \eqref{s1}-\eqref{s5} reduce to
\begin{equation}  \label{ss1}
\left( \sum\limits_{j = 1}^M {  \Delta {\pi _j}W({\alpha _j})\delta _t^{{\alpha _j}}{V^n_h}} {\rm{ }} ,a(\xi) \right)_{{I_s}}  - ( {L_h,\frac{\partial a}{\partial \xi} } )_{{I_s}}+{\left( {\kappa^n (\xi ),a(\xi )} \right)_{{I_s}}}   - \hat{L}_h a|_{\xi_s^ + }^{\xi_{s + 1}^-} +\sqrt b {( E_h, \frac{\partial a}{\partial \xi}} )_{{I_s}} -\sqrt b {( \hat{E}_h a} )|_{\xi_s^ + }^{\xi_{s + 1}^-}= 0, 
\end{equation}
\begin{equation}
\left( {L_h,b(\xi)} \right)_{{I_s}} - {\left( {V^n_h, \frac{\partial b}{\partial \xi} } \right)_{{I_s}}} + {\hat{V}^n_h}b|_{\xi_s^ + }^{\xi_{s + 1}^ - } = 0,
\end{equation}
\begin{equation}
\left( E_h,c(\xi) \right)_{I_s} - \left( \mathcal{L}_{\frac{\alpha-2}{2}} R_h,c(\xi) \right)_{I_s} = 0,
\end{equation}
\begin{equation}
{\left( {{R_h},d(\xi)} \right)_{{I_s}}} - \sqrt b {\rm{ }}{\hat V^n_h}d|_{\xi_s^ - }^{\xi_{s + 1}^ + } + \sqrt b {\left( {V^n_h,{\frac{\partial d}{\partial \xi} }} \right)_{{I_s}}} = 0, 
\end{equation}
\begin{equation}
\left( {{V^n_h}(\xi,0),a(\xi)} \right) - \left( {{V^n_0},a(\xi)} \right) = 0.  \label{ss5}
\end{equation}
As a result, the design can be written as follows:
\begin{equation*}
\mathscr{B}(V^n, L, E, R; a, b, c, d) = \int_0^T \sum\limits_{s = 1}^N \left( \sum\limits_{j = 1}^M \Delta {\pi _j}W({\alpha _j})\delta _t^{{\alpha _j}}{V^n}, a(\xi) \right)_{I_s} \mathrm{d}t - \int_0^T \sum\limits_{s = 1}^N (L, \frac{\partial a}{\partial \xi})_{I_s} \mathrm{d}t + \left( \kappa^n (\xi ), a(\xi) \right)_{I_s}
\end{equation*}
\begin{equation*}
\hspace*{4cm} + \int_0^T \sum\limits_{s = 1}^{N-1} L_{s + 1}^+ \left[\kern-0.15em\left[ a \right]\kern-0.15em\right]_{s + 1} \mathrm{d}t 
+ \sqrt{b} \int_0^T \sum\limits_{s = 1}^N \left(E, \frac{\partial a}{\partial \xi} \right)_{I_s} \mathrm{d}t 
+ \int_0^T \sum\limits_{s = 1}^{N-1} \sqrt{b} E_{s + 1}^+ \left[\kern-0.15em\left[ a \right]\kern-0.15em\right]_{\xi_s^+}^{\xi_{s+1}^-} \mathrm{d}t
\end{equation*}
\begin{equation*}
\hspace*{3.3cm}+ \int_0^T \sum\limits_{s = 1}^N \left( L, b(\xi) \right)_{I_s} \mathrm{d}t 
- \int_0^T \sum\limits_{s = 1}^N \left( V^n, \frac{\partial b}{\partial \xi} \right)_{I_s} \mathrm{d}t 
- \int_0^T \sum\limits_{s = 1}^{N-1} (\hat{V}^n_{s + 1})^+ \left[\kern-0.15em\left[ b \right]\kern-0.15em\right]_{s + 1} \mathrm{d}t
\end{equation*}
\begin{equation*}
\hspace*{3cm} +\int_0^T \sum\limits_{s = 1}^N \left( E, c(\xi) \right)_{I_s} \mathrm{d}t 
- \int_0^T \sum\limits_{s = 1}^N \left( {\mathcal{L} _{\frac{\alpha-2}{2}} R}, c \right)_{I_s} \mathrm{d}t 
+ \int_0^T \sum\limits_{s = 1}^N \left( R, d(\xi) \right)_{I_s} \mathrm{d}t
\end{equation*}
\begin{equation*} 
\hspace*{1.5cm} + \int_0^T {\sum\limits_{s = 1}^N {\sqrt b {{\left( {V^n, \frac{\partial d}{\partial \xi} } \right)}_{{I_s}}}} }+ \sqrt b \int _0^T {\sum\limits_{s=1}^{N-1} (\hat{V}^n_{s + 1})^ + {\left[\kern-0.15em\left[ {d} 
 \right]\kern-0.15em\right]_{s + 1}}|_{\xi_s^ - }^{\xi_{s + 1}^ + }}
\end{equation*}
\begin{equation}  \label{eqB1}
\hspace*{3.5cm}+\sqrt{b}\int_0^T E_1^+a_1^+ \mathrm{d}t +\frac{\sqrt{b}\beta}{h}\int_0^T( V^n_{N+1})^-a_{N+1}^- \mathrm{d}t -\sqrt{b}\int_0^T E_{N+1}^-a_{N+1}^- \mathrm{d}t,
\end{equation}
where 
\begin{equation} \label{n4}
\left| {{\kappa ^n}(\xi )} \right| = \left| {O\left( {{{(\Delta t)}^{2 - {\alpha _j}}} + {p ^2}} \right)} \right| \le c\left( {{{(\Delta t)}^{1 + \frac{p }{2}}} + {p ^2}} \right),
\end{equation}
such that
\begin{equation} \label{n5}
1 + \frac{p }{2} = 2 - Mp  + \frac{p }{2} \le 2 - {\alpha _j} = 2 - jp  + \frac{p }{2} \le 2 - p  + \frac{p }{2} = 2 - \frac{p }{2}.
\end{equation}

We define projection $ \mathscr{S}^{ \pm } $ in $ V^k $ such that
\begin{equation} \label{pro2}
\int_{{I_s}} {\left( {\mathscr{S}^{ \pm }e(x) - e(x)} \right){\zeta _{ij}}(\xi)\mathrm{d}\xi = 0}.  \quad j =1,2,.., N, \quad i=0,1,...k-1
\end{equation}
 and $\mathscr{S}^{ \pm }V^n_{s+1}=V^n(\xi^{\pm}_{s+1}) $. 
Suppose $ {e_{V^n}} = V^n - {V^n_h},\,\,\,{e_E} = E - {E_h},\,\,\,{e_L} = L - {L_h}$, and ${e_R} = R - {R_h} $, then  $ {\mathscr{S}^ - }{e_{V^n}}{\rm{ }} = {\rm{ }}{{\rm{\mathscr{S}}}^ - }V^n{\rm{ }} - {V^n_h},\,\,\,{\mathscr{S}^ + }{e_E}{\rm{ }} = {\rm{ }}{{\mathscr{S}}^ + }{E } - E_h,\,\,\,{\mathscr{S}^ + }{e_L}{\rm{ }} = {\rm{ }}{{\rm{\mathscr{S}}}^ + }L{\rm{ }} - {\rm{ }}{{L}}_h$, and $\mathscr{S}{e_R}{\rm{ }} = {\rm{ \mathscr{S}}}R{\rm{ }} - {R_h}{\rm{ }} $ for all $ (a, b, c, d) \in {H^1}(\Omega ,\mathcal{T}) \times {L^2}(\Omega ,\mathcal{T}) \times {L^2}(\Omega ,\mathcal{T}) \times {L^2}(\Omega ,\mathcal{T}) $,
\begin{equation}
\mathscr{B}(V^n, L, E, R; a, b, c, d) = \mathscr{S}(a, b, c, d).
\end{equation}
Hence,\,\, $ \mathscr{B}\left( {{e_{V^n}},{e_L},{e_E},{e_R}; a, b, c, d} \right) = 0 $ and we gain
\begin{equation*}
\hspace{-7cm}\mathscr{B}\left( {{\mathscr{S}^ - }{e_{V^n}},{\mathscr{S}^ + }{e_L},{\mathscr{S}^ + }{e_E},\mathscr{S}{e_R};{\mathscr{S}^ - }{e_{V^n}},{\mathscr{S}^ + }{e_L}, - \mathscr{S}{e_R},{\mathscr{S}^ + }{e_E}} \right)
\end{equation*}
\begin{equation*}
\hspace*{+1.1cm}= \mathscr{B}\left( {{\mathscr{S}^ - }{e_{V^n}} - {e_{V^n}},{\mathbb{S}^ + }{e_L} - {e_L},{\mathbb{S}^ + }{e_E} - {e_E},\mathscr{S}{e_R} - {e_R};{\mathscr{S}^ - }{e_{V^n}},{\mathscr{S}^ + }{e_L}, - \mathscr{S}{e_R},{\mathscr{S}^ + }{e_E}} \right)
\end{equation*}
\begin{equation*}
\hspace*{0.5cm}= \mathscr{B}\left( {{\mathscr{S}^ - }{V}^n{\rm{ }} - {V^n},{\mathscr{S}^ + }L{\rm{ }} - {\rm{ L}},{\mathscr{S}^ + }E - E,\mathscr{S}R - R;{\mathscr{S}^ - }{e_{V^n}},{\mathscr{S}^ + }{e_L}, - \mathscr{S}{e_R},{\mathscr{S}^ + }{e_E}} \right).
\end{equation*}
Substitute $ \left( {{\mathscr{S}^ - }{V^n}{\rm{ }} - {V^n},{\mathscr{S}^ + }L{\rm{ }} - {\rm{ L}},{\mathscr{S}^ + }E - E,\mathscr{S}R - R;{\mathscr{S}^ - }{e_{V^n}},{\mathscr{S}^ + }{e_L}, - \mathscr{S}{e_R},{\mathscr{S}^ + }{e_E}} \right) $ into \eqref{eqB1} we come to the following Lemma:
\begin{lem} \label{good}
Form \eqref{eqB1} can be written as follows.
\begin{equation*}
\hspace{-5cm}\mathscr{B}\left( {{\mathscr{S}^ - }{V^n}{\rm{ }} - {V^n},{\mathscr{S}^ + }L{\rm{ }} - {\rm{ L}},{\mathscr{S}^ + }E - E,\mathscr{S}R - R;{\mathscr{S}^ - }{e_{V^n}},{\mathscr{S}^ + }{e_L}, - \mathscr{S}{e_R},{\mathscr{S}^ + }{e_E}} \right)
\end{equation*}
\begin{equation*}
\le \int_0^T \sum\limits_{s = 1}^N \left( \sum\limits_{j = 1}^M \Delta {\pi _j}W({\alpha _j})\delta _t^{{\alpha _j}}\left(\mathscr{S^-} V^n-V^n\right), \mathscr{S}^ - e_{V^n} \right)_{I_s} \mathrm{d}t + C_{T,a,b} \left( h^{2k+2} + (\Delta t)^{4 + p} + p^4 \right)
\end{equation*}
\begin{equation*}
+ \frac{1}{C_{T,a,b}} \int _0^T \sum\limits_{s=1}^N \left\| {\mathscr{S}e_L} \right\|_{I_s}^2 \mathrm{d}t+\int_0^T \frac{\sqrt{b}\beta}{h}|(\mathscr{S^-}e_{V^n})_{N+1}|^2 \mathrm{d}t + \int_0^T \sum\limits_{s = 1}^N {{ {{\left\|{\mathscr{S}^ + {e_L}}\right\| }_{{I_s}}^2}\mathrm{d}t}},
\end{equation*}
where $ C_{T,a,b} $ is independent of $ h $, but may depend on $ T $ and $ \Omega $.
\end{lem}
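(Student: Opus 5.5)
We substitute the projection errors $\eta_V=\mathscr{S}^-V^n-V^n$, $\eta_L=\mathscr{S}^+L-L$, $\eta_E=\mathscr{S}^+E-E$, $\eta_R=\mathscr{S}R-R$ as the first four arguments of \eqref{eqB1}, with test functions $(a,b,c,d)=(\mathscr{S}^-e_{V^n},\mathscr{S}^+e_L,-\mathscr{S}e_R,\mathscr{S}^+e_E)$, and bound the resulting terms one group at a time. The time-fractional term $\int_0^T\sum_s(\sum_j\Delta\pi_jW(\alpha_j)\delta_t^{\alpha_j}\eta_V,\mathscr{S}^-e_{V^n})_{I_s}\,\mathrm{d}t$ is left untouched, since it appears verbatim on the right-hand side of the claim. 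The truncation term $(\kappa^n,\mathscr{S}^-e_{V^n})$ is handled by Cauchy--Schwarz and Young together with \eqref{n4}--\eqref{n5}. This gives $C(\Delta t)^{2+p}+Cp^2$ times $\|\kappa^n\|$, and after squaring the constants this produces the $(\Delta t)^{4+p}+p^4$ contribution; the accompanying $\|\mathscr{S}^-e_{V^n}\|^2$ piece is kept for a later Gronwall step.

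Next we use the defining orthogonality \eqref{pro2} of $\mathscr{S}^\pm$. Because $\partial_\xi a$ and $\partial_\xi b$ lie in $\mathcal{P}^{k-1}(I_s)$, the volume terms $(\eta_L,\partial_\xi\mathscr{S}^-e_{V^n})_{I_s}$, $(\eta_E,\partial_\xi\mathscr{S}^-e_{V^n})_{I_s}$, $(\eta_V,\partial_\xi\mathscr{S}^+e_L)_{I_s}$ and $(\eta_V,\partial_\xi\mathscr{S}^+e_E)_{I_s}$ all vanish. The projections are matched to the fluxes: $\mathscr{S}^-$ for $V$ with $\hat V=V^-$, and $\mathscr{S}^+$ for $L,E$ with $\hat L=L^+$, $\hat E=E^+$. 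Hence the interior flux terms, which evaluate $\eta_V$ at $\xi^-$ and $\eta_L,\eta_E$ at $\xi^+$, vanish as well, since $\mathscr{S}^\pm$ interpolate at those endpoints.

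What remains are the zeroth-order terms $(\eta_L,\mathscr{S}^+e_L)$, $(\eta_E,-\mathscr{S}e_R)$, $(\mathcal{L}_{\frac{\alpha-2}{2}}\eta_R,\mathscr{S}e_R)$ and $(\eta_R,\mathscr{S}^+e_E)$, together with the boundary terms at $\xi_1^+$ and $\xi_{N+1}^-$.

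The zeroth-order terms are bounded by Cauchy--Schwarz, Lemma~\ref{lem20} for the fractional integral, and the standard approximation estimate $\|\eta\|_{I_s}\le Ch^{k+1}$. Young's inequality with weight $C_{T,a,b}$ then gives $C_{T,a,b}h^{2k+2}+\frac{1}{C_{T,a,b}}\|\mathscr{S}e_L\|^2+\|\mathscr{S}^+e_L\|^2$.

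The cross terms in $e_R$ and $e_E$ are the main obstacle. My first attempt would be to show they cancel, or reduce to $e_L$-type quantities, through the auxiliary equations $E=\mathcal{L}_{\frac{\alpha-2}{2}}R$ and $R=\sqrt b\,V_\xi$. Failing that, I would absorb them by Young's inequality into the same $\frac{1}{C}\|\cdot\|^2$ slots.

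The boundary terms are estimated with the trace inequality $|\eta(\xi^\pm)|\le Ch^{k+1/2}$. Combined with the penalty weight $\frac{\sqrt b\beta}{h}$, this yields $\frac{\sqrt b\beta}{h}|(\mathscr{S}^-e_{V^n})_{N+1}|^2+Ch^{2k+2}$. Summing over $s$ and integrating in $t$ gives the stated bound with $C_{T,a,b}$ independent of $h$.
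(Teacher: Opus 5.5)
Your skeleton is the same as the paper's: substitute the projection errors, kill the volume terms with \eqref{pro2} and the interior fluxes by endpoint interpolation, then use Cauchy--Schwarz, Lemma~\ref{lem20}, approximation and Young. The gap is the step you call the main obstacle. You leave it as a hedge, and the fallback you name fails for the $e_E$ term.

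\textbf{The $e_E$ cross term.} Plain Young on $(\mathscr{S}R-R,\mathscr{S}^+e_E)_{I_s}$ leaves $\|\mathscr{S}^+e_E\|^2$ on the right, and nothing can absorb it. The lemma has no such slot. Moreover, the test choice $(c,d)=(-R,E)$ is made precisely so that $(E,c)+(R,d)$ cancels. Hence, besides the time-fractional term, the energy identity (Lemma~\ref{Lemma3} and its use in the convergence theorem) controls only $\|\mathscr{S}^+e_L\|^2$, $(\mathcal{L}_{\frac{\alpha-2}{2}}\mathscr{S}e_R,\mathscr{S}e_R)$ and the boundary penalty, never a norm of $e_E$.

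The missing observation is that this term is identically zero. $\mathscr{S}$ (no superscript) is the elementwise $L^2$ projection onto $\mathcal{P}^k$, as the paper's projection property presupposes. Since $\mathscr{S}^+e_E=\mathscr{S}^+E-E_h\in\mathcal{P}^k(I_s)$, orthogonality gives $(\mathscr{S}R-R,\mathscr{S}^+e_E)_{I_s}=0$. This is how the paper removes it.

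Your auxiliary-equation route could be rescued, but not through the continuous relations $E=\mathcal{L}_{\frac{\alpha-2}{2}}R$, $R=\sqrt b\,V_\xi$ alone, which neither the projection errors nor $E_h$ satisfy. You would need \eqref{s3}, which makes $E_h$ the elementwise $L^2$ projection of $\mathcal{L}_{\frac{\alpha-2}{2}}R_h$. Together with Lemma~\ref{lem20} this gives $\|\mathscr{S}^+e_E\|\le Ch^{k+1}+C\|\mathscr{S}e_R\|$, and you do not carry this out.

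\textbf{The $e_R$ cross term.} Once the $e_E$ term is gone, the only cross term left is $(\mathcal{L}_{\frac{\alpha-2}{2}}(\mathscr{S}R-R)-(\mathscr{S}^+E-E),\mathscr{S}e_R)$. Lemma~\ref{lem20} and approximation bound its first factor by $Ch^{k+1}$. Young then gives $C_{T,a,b}h^{2k+2}+\frac{1}{C_{T,a,b}}\|\mathscr{S}e_R\|^2$. This is what the paper puts in the slot printed as $\frac{1}{C_{T,a,b}}\|\mathscr{S}e_L\|^2$. So your fallback is right for the $e_R$ term and wrong for the $e_E$ term.

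\textbf{The $\kappa^n$ step.} This step does not land on the stated right-hand side. Cauchy--Schwarz and Young leave a $\|\mathscr{S}^-e_{V^n}\|^2$ term that the lemma does not contain, so deferring it to a Gronwall step proves a weaker inequality. Also, squaring \eqref{n4} gives $(\Delta t)^{2+p}+p^4$, not $(\Delta t)^{4+p}$; adjusting constants cannot change an exponent. The paper's expansion of $\mathscr{B}$ in this proof contains no $\kappa^n$ term, cites \eqref{n4} only at the end, and the next theorem uses $(\Delta t)^{2+p}$. So do not try to derive $(\Delta t)^{4+p}$.

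\textbf{Boundary terms.} Your treatment here is fine. $(\mathscr{S}^+E-E)^+_1$ and $(\mathscr{S}^-V^n-V^n)^-_{N+1}$ vanish by interpolation. Your trace-plus-penalty bound handles the surviving term $-\sqrt b\,(\mathscr{S}^+E-E)^-_{N+1}(\mathscr{S}^-e_{V^n})^-_{N+1}$, which is exactly the one the paper keeps.
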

\begin{proof}
From \eqref{eqB1} we have
\begin{equation*}
\hspace{-4cm}\mathscr{B}\left( {{\mathscr{S}^ - }V^n{\rm{ }} - V^n,{\mathscr{S}^ + }L{\rm{ }} - {\rm{ L}},{\mathscr{S}^ + }E - E,\mathscr{S}R - R;{\mathscr{S}^ - }{e_{V^n}},{\mathscr{S}^ + }{e_L}, - \mathscr{S}{e_R},{\mathscr{S}^ + }{e_E}} \right)
\end{equation*}
\begin{equation*}
= \int_0^T \sum\limits_{s = 1}^N \left( \sum\limits_{j = 1}^M \Delta {\pi _j}W({\alpha _j})\delta _t^{{\alpha _j}}\left(\mathscr{S^-} V^n-V^n\right), \mathscr{S}^ - e_{V^n} \right)_{I_s} \mathrm{d}t + \int_0^T \sum\limits_{s = 1}^N \left( \mathscr{S}^ + L - L, \frac{\partial (\mathscr{S}^ - e_{V^n})}{\partial \xi} \right)_{I_s} \mathrm{d}t
\end{equation*}
\begin{equation*}
\hspace*{1cm} + \sqrt b \int_0^T \sum\limits_{s = 1}^N \left( \mathscr{S}^+ E - E, \frac{\partial (\mathscr{S}^- e_{V^n})}{\partial \xi} \right)_{I_s} \mathrm{d}t + \int_0^T \sum\limits_{s = 1}^N \left( \mathscr{S}^+ L - L, \mathscr{S}^+ e_L \right)_{I_s} \mathrm{d}t
\end{equation*}
\begin{equation*}
 + \int_0^T {\sum\limits_{s = 1}^N {{{\left( {{\mathscr{S}^ - }{V^n} - {V^n},{{({\mathscr{S}^ + }{e_L})}_\xi}} \right)}_{{I_s}}}\mathrm{d}t}  - } \int_0^T {\sum\limits_{s = 1}^N {{{\left( {{\mathscr{S}^ + }E - E,{{\mathscr{S}{e_R}}}} \right)}_{{I_s}}}\mathrm{d}t}  } 
\end{equation*}
\begin{equation*}
\hspace*{0.8cm} + \int_0^T \sum\limits_{s = 1}^N \left( \mathcal{L}_{\frac{\alpha-2}{2}} (\mathscr{S} R - R), \mathscr{S} e_R \right)_{I_s} \mathrm{d}t + \int_0^T \sum\limits_{s = 1}^N \left( (\mathscr{S} R - R), \mathscr{S}^+ e_E \right)_{I_s} \mathrm{d}t
\end{equation*}
\begin{equation*}
\hspace*{1.3cm}-\int_0^T {\sum\limits_{s = 1}^N {{{\left( {{\mathscr{S}^ - }V^n - V^n,{{({\mathscr{S}^ + }{e_E})}_\xi}} \right)}_{{I_s}}}\mathrm{d}t} } - \int_0^T {\sum\limits_{s = 1}^{N} {{{({\mathscr{S}^ + }L - L)}^ +_{s+1} }
     { \left[\kern-0.25em\left[ {{\mathscr{S}^ - }{e_{V^n}}} 
 \right]\kern-0.25em\right]     _{s + 1}}\mathrm{d}t}  }
\end{equation*}
\begin{equation*}
\hspace*{2.4cm} -\sqrt{b} \int_0^T \sum\limits_{s = 1}^{N -1} ({\mathscr{S}^ + }E - E)_{s + 1}^+ \left[\kern-0.25em\left[ {\mathscr{S}^ - }{e_{V^n}} \right]\kern-0.25em\right]_{s + 1} \mathrm{d}t - \int_0^T \sum\limits_{s = 1}^{N} ({\mathscr{S}^ - }V^n - V^n)_{s + 1}^- \left[\kern-0.25em\left[ {\mathscr{S}^ + }{e_L} \right]\kern-0.25em\right]_{s + 1} \mathrm{d}t
\end{equation*}
\begin{equation*}
\hspace*{1.5cm} -\sqrt{b}\int_0^T {\sum\limits_{s = 1}^{N-1} {({\mathscr{S}^ - }V^n - V^n)_{s + 1}^ -  {\left[\kern-0.25em\left[ {{\mathscr{S}^ + }{e_E}} 
 \right]\kern-0.25em\right]_{s + 1}}\mathrm{d}t}  } +\sqrt{b}\int_0^T (\mathscr{S}^+E-E)_{1}^+  \left[\kern-0.25em\left[ {\mathscr{S}^-e^+_{V^n}} 
 \right]\kern-0.25em\right] _{1} \mathrm{d}t
\end{equation*}
\begin{equation*}
\hspace*{2.3cm}+\frac{\sqrt{b}\beta}{h}\int_0^T (\mathscr{S}^-V^n-V^n)_{N+1}^+ \left[\kern-0.25em\left[ {\mathscr{S}^-e^-_{V^n}} 
 \right]\kern-0.25em\right] _{N+1} \mathrm{d}t-\sqrt{b}\int_0^T (\mathscr{S}^+E-E)_{N+1}^- \left[\kern-0.25em\left[ {\mathscr{S}^-e^-_{V^n}} 
 \right]\kern-0.25em\right] _{N+1} \mathrm{d}t.
\end{equation*}
We know $ {{{({\mathscr{S}^ + }{e_E})}_\xi} \in \mathcal{P}^{k-1}}, \,{{{({\mathscr{S}^ - }{e_{V^n}})}_\xi}}  \in \mathcal{P}^{k-1},\,{{{({\mathscr{S}^ + }{e_L})}_\xi} \in \mathcal{P}^{k-1}}, \, {\mathscr{P}{e_R}} \in \mathcal{P}^{k} $,  Using projection properties     
\begin{equation*}
 \mathscr{S}^{\pm}:
\left( {\mathscr{S}^ + }L - L, {\mathscr{S}^ - }{e_{V^n}}_\xi \right)_{I_s} = 0, \left( {\mathscr{S}^ + }E - E, {\mathscr{S}^- e_{V^n}}_\xi \right)_{I_s} = 0,
\end{equation*}
\begin{equation*}
{\left( {{\mathscr{S}^ - }V^n - V^n,{{({\mathscr{S}^ + }{e_E})}_\xi}} \right)_{{I_s}}} = 0, \, \left( {\mathscr{S}R - R,{\mathscr{S}^ + }{e_E}} \right)_{{I_s}} = 0,\,
\end{equation*}
\begin{equation*}
\left( {\mathscr{S}R - R,({\mathscr{S}^ + }{e_E})_\xi} \right)_{I_s} = 0, ({\mathscr{S}^ + }E - E)_{s + 1} = 0, ({\mathscr{S}^ - }V^n - V^n)_{s + 1} = 0,
\end{equation*}
therefore 
\begin{equation*}
\hspace{-4cm}\mathscr{B}\left( {{\mathscr{S}^ - }V^n{\rm{ }} - V^n,{\mathscr{S}^ + }L{\rm{ }} - {\rm{ L}},{\mathscr{S}^ + }E - E,\mathscr{S}R - R;{\mathscr{S}^ - }{e_{V^n}},{\mathscr{S}^ + }{e_L}, - \mathscr{S}{e_R},{\mathscr{S}^ + }{e_E}} \right)
\end{equation*}
\begin{equation*}
\hspace*{-1.5cm} = \int_0^T \sum\limits_{s = 1}^N \left( \sum\limits_{j = 1}^M \Delta {\pi _j} W({\alpha _j}) \delta _t^{{\alpha _j}} (\mathscr{S^-} V^n - V^n), \mathscr{S}^- e_V \right)_{I_s} \mathrm{d}t + \int_0^T \sum\limits_{s = 1}^N \left( \mathscr{S}^+ L - L, \mathscr{S}^+ e_L \right)_{I_s} \mathrm{d}t
\end{equation*}
\begin{equation*}
\quad\quad\quad+\int_0^T \sum\limits_{s = 1}^N \left( {\mathcal{L}_{\frac{\alpha-2}{2}} (\mathscr{S}R - R)} - (\mathscr{S}^+E-E), \mathscr{S} e_R \right)_{I_s} \mathrm{d}t - \sqrt{b} \int_0^T (\mathscr{S}^+E-E^-)_{N+1} \left[\kern-0.25em\left[ {\mathscr{S}^-e_{V^n}} \right]\kern-0.25em\right]_{N+1}^- \mathrm{d}t.
\end{equation*}
Using Lemma \ref{lem20}  we have
\begin{equation*}
\left\| {\mathcal{L}_{\frac{\alpha-2}{2}} (\mathscr{S}R - R) - (\mathscr{S}^+E-E)} \right\| \le C h^{k + 1}.
\end{equation*}
 Combining this with Young’s inequality \cite{mitrinovic1970analytic} and  property \eqref{n4}, we obtain
\begin{equation*}
\hspace*{-6cm}\mathscr{B}\left( {{\mathscr{S}^ - }V{\rm{ }} - V,{\mathscr{S}^ + }L{\rm{ }} - {\rm{ L}},{\mathscr{S}^ + }E - E,\mathscr{S}R - R;{\mathscr{S}^ - }{e_V},{\mathscr{S}^ + }{e_L}, - \mathscr{S}{e_R},{\mathscr{S}^ + }{e_E}} \right)
\end{equation*}
\begin{equation*}
\le \int_0^T \sum\limits_{s = 1}^N \left( \sum\limits_{j = 1}^M \Delta {\pi _j} W({\alpha _j}) \delta _t^{{\alpha _j}} (\mathscr{S^-} V^n - V^n), \mathscr{S}^- e_V \right)_{I_s} \mathrm{d}t + C_{T,a,b} \left( h^{2k+2} + (\Delta t)^{4 + p} + p^4 \right)
\end{equation*}
\begin{equation*}
+ \frac{1}{C_{T,a,b}} \int _0^T \sum\limits_{s=1}^N\left\| {\mathscr{S}e_L} \right\|_{I_s}^2 \mathrm{d}t +\int_0^T \frac{\sqrt{b}\beta}{h}|(\mathscr{S^-}e_V)_{N+1}|^2 \mathrm{d}t + \int_0^T \sum\limits_{s = 1}^N {{\left\|{\mathscr{S}^ + }{e_L}\right\| }_{{I_s}}^2}\mathrm{d}t.
\end{equation*}
\end{proof}
\begin{thm}
Let  $V$ be a exact solution of the equation \eqref{eq1-1}  in $ \Omega \subset \mathbb{R} $
such that $ F(V) = 0 $. Assuming $ V^n_h $ is the numerical solution of the semi-discrete LDG scheme \eqref{s1}-\eqref{s5}.  For small enough $ h $, the error estimation  is as follows:
\begin{equation*}
{\left\| {V(\xi ,{t_n}) - V_h^n} \right\|_{{L^2}(\Omega )}} \le C\left( {{h^{k + 1}} + {{(\Delta t)}^{1 + \frac{p }{2}}} + {p ^2}} \right),
\end{equation*}
\end{thm}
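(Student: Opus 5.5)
The plan is the standard LDG energy argument. Split the error as $e_{V^n}=(V^n-\mathscr{S}^-V^n)+\mathscr{S}^-e_{V^n}$, and similarly for $L,E,R$. The first part is a pure projection error. By the approximation properties of the projections \eqref{pro2} it is $O(h^{k+1})$ in $L^2(\Omega)$, so the task reduces to bounding the discrete part $\mathscr{S}^-e_{V^n}\in V^k$.

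\textbf{Energy identity for the discrete error.} Start from the Galerkin orthogonality $\mathscr{B}(e_{V^n},e_L,e_E,e_R;a,b,c,d)=0$. The consistency residual $\kappa^n$ from \eqref{n4} is carried along, bounded by $c((\Delta t)^{1+\frac p2}+p^2)$ through \eqref{n5} and Lemma \ref{lem1-1}. Take the test functions $(a,b,c,d)=(\mathscr{S}^-e_{V^n},\mathscr{S}^+e_L,-\mathscr{S}e_R,\mathscr{S}^+e_E)$. As in Lemma \ref{Lemma3}, with $F=0$ and $S\equiv1$, the left-hand side $\mathscr{B}(\mathscr{S}^-e_{V^n},\dots;\dots)$ becomes
\[
\int_0^T\sum_s\Big(\sum_j\Delta\pi_jW(\alpha_j)\delta_t^{\alpha_j}\mathscr{S}^-e_{V^n},\mathscr{S}^-e_{V^n}\Big)_{I_s}\mathrm{d}t+\int_0^T\|\mathscr{S}^+e_L\|^2\mathrm{d}t+\int_0^T(\mathcal{L}_{\frac{\alpha-2}{2}}\mathscr{S}e_R,\mathscr{S}e_R)\mathrm{d}t+\int_0^T\frac{\sqrt b\beta}{h}|(\mathscr{S}^-e_{V^n})_{N+1}^-|^2\mathrm{d}t .
\]
All terms except the first are nonnegative, since $\mathcal{L}_{\frac{\alpha-2}{2}}$ is a positive fractional integral operator. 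Lemma \ref{good} bounds the right-hand side. The $\|\mathscr{S}^+e_L\|^2$ and boundary-penalty terms there are absorbed into the left, using the small constant $1/C_{T,a,b}$. What remains is
\[
\sum_j\Delta\pi_jW(\alpha_j)\big(\delta_t^{\alpha_j}\mathscr{S}^-e^n_{V},\mathscr{S}^-e^n_V\big)\le \sum_j\Delta\pi_jW(\alpha_j)\big(\delta_t^{\alpha_j}(\mathscr{S}^-V^n-V^n),\mathscr{S}^-e_V^n\big)+C\big(h^{2k+2}+(\Delta t)^{2+p}+p^4\big)+\tfrac12\|\mathscr{S}^-e^n_V\|^2 .
\]
To obtain this I work at the time level $t_n$ rather than integrating in $t$. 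The $\kappa^n$ term enters through Cauchy--Schwarz and Young, giving the $(\Delta t)^{2+p}+p^4$ contribution. The projection term is handled by commuting $\delta_t^{\alpha_j}$ with $\mathscr{S}^-$ (the projection acts in $\xi$ only), so it is $O(h^{k+1})$ times a bounded time-derivative factor.

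\textbf{Discrete time recursion.} Expand $\delta_t^{\alpha_j}$ by \eqref{ta} and multiply by $Q$ as in the stability proof, which normalises the coefficient of the current level. This gives
\[
\|\mathscr{S}^-e^n_V\|\le \sum_jQ\frac{W(\alpha_j)\Delta\pi_j}{\lambda_j}\Big(\sum_{l=1}^{n-1}(a^{\alpha_j}_{n-l-1}-a^{\alpha_j}_{n-l})\|\mathscr{S}^-e^l_V\|+a^{\alpha_j}_{n-1}\|\mathscr{S}^-e^0_V\|\Big)+C\,\Lambda_n\big(h^{k+1}+(\Delta t)^{1+\frac p2}+p^2\big),
\]
where $\Lambda_n=Q\sum_j W(\alpha_j)\Delta\pi_j\lambda_j^{-1}$ up to the factor $(a^{\alpha_j}_{n-1})^{-1}$. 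I then induct on $n$, following the stability theorem. The coefficients $a^{\alpha_j}_l$ are positive and decreasing, and $\sum_{l}(a_{n-l-1}-a_{n-l})+a_{n-1}=1$. The key fact is $(a^{\alpha_j}_{n-1})^{-1}\lambda_j\le C T^{\alpha_j}$, which follows from $a_{n-1}\ge(1-\alpha_j)n^{-\alpha_j}$. Using it, the inductive hypothesis $\|\mathscr{S}^-e^l_V\|\le C(a^{\alpha_j}_{l-1})^{-1}\varepsilon$ propagates with a constant independent of $n$. The initial error $\mathscr{S}^-e^0_V$ is $O(h^{k+1})$ by \eqref{ss5}. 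Adding back the projection error gives the stated estimate.

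\textbf{Main obstacle.} The hardest step is this induction with the multi-term (distributed-order) weights. The $j$-dependence of $\lambda_j$ and $a^{\alpha_j}$ must be controlled uniformly, so that the $\Delta t^{-\alpha_j}$ factors do not destroy the rate. This is where \eqref{n5} enters: it gives the worst exponent $2-\alpha_j\ge1+\frac p2$. My first attempt is to take the maximum over $j$ of the single-order bound $(a^{\alpha_j}_{n-1})^{-1}\lambda_j\le T^{\alpha_j}/(1-\alpha_j)$. The difficulty is the last node, where $1-\alpha_M=p/2$ is small. If that factor blows up, I would instead absorb it using the weight $\Delta\pi_M=p$, which is of the same size as $1-\alpha_M$, so the product stays bounded.
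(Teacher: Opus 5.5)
Your outline follows the paper's route: the identity of Lemma~\ref{Lemma3} with test functions $(\mathscr{S}^-e_{V^n},\mathscr{S}^+e_L,-\mathscr{S}e_R,\mathscr{S}^+e_E)$, then Lemma~\ref{good}, the expansion \eqref{ta}, normalisation by $Q$, and an induction modelled on the stability proof. Working at a fixed level $t_n$ with an $(a_{n-1})^{-1}$-type hypothesis is more careful than the paper's final induction, which does not address the $j$-dependence at all.

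The genuine gap is the step you yourself call the main obstacle: it is left open, and neither fix you offer closes it.
\begin{itemize}
\item Your hypothesis $\|\mathscr{S}^-e^l_V\|\le C(a^{\alpha_j}_{l-1})^{-1}\varepsilon$ depends on $j$, so it is not a statement about the single quantity $\|\mathscr{S}^-e^l_V\|$. Without a factor $\lambda_j$ it also grows like $l^{\alpha_j}$.
\item Bounding node by node through $\max_j\lambda_j/a^{\alpha_j}_{n-1}\le\max_j t_n^{\alpha_j}\Gamma(1-\alpha_j)$ costs a factor of order $T/p$ at $j=M$. That destroys all three rates.
\item The compensation by $\Delta\pi_M=p$ does not actually occur. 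Node by node, $W(\alpha_j)\Delta\pi_j$ multiplies both that node's residual and its coefficient $W(\alpha_j)\Delta\pi_j a^{\alpha_j}_{n-1}/\lambda_j$, so it cancels. Moreover $1-\alpha_j=(2(M-j)+1)p/2$ is $O(p)$ for every $j$ with $M-j=O(1)$, not only for $j=M$.
\end{itemize}

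The missing idea is to aggregate over $j$ before dividing. Set
\[
A_n=\sum_{j=1}^M\frac{W(\alpha_j)\Delta\pi_j}{\lambda_j}\,a^{\alpha_j}_{n-1},
\]
which is nonincreasing in $n$, with $A_1=Q^{-1}$. Consider a recursion
\[
Q^{-1}y_n\le\sum_{j=1}^M\frac{W(\alpha_j)\Delta\pi_j}{\lambda_j}\Big(\sum_{l=1}^{n-1}(a^{\alpha_j}_{n-l-1}-a^{\alpha_j}_{n-l})\,y_l+a^{\alpha_j}_{n-1}y_0\Big)+\Phi .
\]
Because $\sum_{l=1}^{n-1}(a_{n-l-1}-a_{n-l})=1-a_{n-1}$, the hypothesis $y_l\le y_0+\Phi/A_l$ propagates exactly. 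The estimate $a^{\alpha}_{n-1}\ge(1-\alpha)n^{-\alpha}$ gives $a^{\alpha}_{n-1}/\lambda\ge t_n^{-\alpha}/\Gamma(1-\alpha)$. Hence $A_n\ge\min(1,T^{-1})\sum_jW(\alpha_j)\Delta\pi_j/\Gamma(1-\alpha_j)$. This is a midpoint sum for $\int_0^1W(\alpha)/\Gamma(1-\alpha)\,d\alpha>0$ when $W\ge0$ is nontrivial, so $A_n$ is bounded below uniformly in $n$ and $p$. The nodes near $\alpha=1$ simply contribute little and need no special treatment.

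Run this recursion on $y_n=\|\mathscr{S}^-e^n_V\|^2$, using $2(u,v)\le\|u\|^2+\|v\|^2$ on the history terms. Take $\Phi=2\rho\max_l\|\mathscr{S}^-e^l_V\|+Ch^{2k+2}$, with $\rho=C(h^{k+1}+(\Delta t)^{1+\frac p2}+p^2)$ bounding the residual.
\begin{itemize}
\item Do not pass to norms as you do. The $O(h^{2k+2})$ remainders from Lemma~\ref{good} are not multiplied by $\|\mathscr{S}^-e^n_V\|$. From $Q^{-1}x^2\le Bx+D$ you only get $Q^{-1}x\le B+(Q^{-1}D)^{1/2}$, and $Q^{-1}$ is large.
\item For the same reason, handle $\kappa^n$ by Cauchy--Schwarz, not by Young with $\frac12\|\mathscr{S}^-e^n_V\|^2$. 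That $O(1)$ coefficient spoils the exact balance $\sum_l(a_{n-l-1}-a_{n-l})+a_{n-1}=1$ the induction relies on.
\end{itemize}

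Finally, your absorption step is misdescribed. In Lemma~\ref{good} the $\|\mathscr{S}^+e_L\|^2$ and penalty terms carry coefficient $1$ and cancel exactly against the left side. The leftover $\frac{1}{C_{T,a,b}}$ term comes from Young's inequality on the $\mathscr{S}e_R$ pairing. Positivity of the fractional integral $\mathcal{L}_{\frac{\alpha-2}{2}}$ gives no $L^2$ control of $\mathscr{S}e_R$. What does work is that, for $S\equiv1$, the $R$-equation is $\sqrt b$ times the $L$-equation (same flux $\hat V$). So $\|\mathscr{S}e_R\|\le\sqrt b\|\mathscr{S}^+e_L\|+Ch^{k+1}$, and you must redo the Young step with total weight $<1$ on $\|\mathscr{S}^+e_L\|^2$.
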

\begin{proof}
Using Lemma \ref{Lemma3} with initial error $ \left\| {{\mathscr{S}^ - }{e_V}(0)} \right\| = 0 $ we have
\begin{equation*}
\hspace{-7cm}\mathscr{B}\left( {{\mathscr{S}^ - }e_V,{\mathscr{S}^ + }e_L,{\mathscr{S}^ + }e_E,\mathscr{S}e_R;{\mathscr{S}^ - }{e_V},{\mathscr{S}^ + }{e_L}, - \mathscr{S}{e_R},{\mathscr{S}^ + }{e_E}} \right)
\end{equation*}
\begin{equation*}
= \int_0^T \sum\limits_{s = 1}^N \left( \sum\limits_{j = 1}^M \Delta {\pi _j} W({\alpha _j}) \delta _t^{{\alpha _j}} \left(\mathscr{S^-}e_{V^n}\right), \mathscr{S}^- e_{V^n} \right)_{I_s} \mathrm{d}t + \int_0^T \sum\limits_{s = 1}^N \left( {\mathcal{L}_{\frac{\alpha-2}{2}} (\mathscr{S}e_R)}, \mathscr{S}e_R \right)_{I_s} \mathrm{d}t
\end{equation*}
\begin{equation*}
\hspace*{-3cm}+ \int_0^T \sum\limits_{s = 1}^N {{{\left\|{\mathscr{S}^+}{e_L}\right\|}_{{I_s }}^2}\mathrm{d}t }+\int_0^T \frac{\sqrt{b}\beta}{h}|(\mathscr{S^-}e_{V^n})_{N+1}|^2 \mathrm{d}t.
\end{equation*}
Recalling Lemma \ref{good}, we have
\begin{equation*}
\hspace*{-2cm}
\int_0^T \sum\limits_{s = 1}^N \left( \sum\limits_{j = 1}^M \Delta {\pi _j} W({\alpha _j}) \delta _t^{{\alpha _j}} \left(\mathscr{S^-}e_{V^n}\right), \mathscr{S}^- e_{V^n} \right)_{I_s} \mathrm{d}t + \int_0^T \sum\limits_{s = 1}^N \left( {\mathcal{L}_{\frac{\alpha-2}{2}} (\mathscr{S}e_R), \mathscr{S}e_R} \right)_{I_s} \mathrm{d}t
\end{equation*}
\begin{equation*}
\le \int_0^T \sum\limits_{s = 1}^N \left( \sum\limits_{j = 1}^M \Delta {\pi _j} W({\alpha _j}) \delta _t^{{\alpha _j}} \left(\mathscr{S^-}e_{V^n}\right), \mathscr{S}^- e_{V^n} \right)_{I_s} \mathrm{d}t + C_{T,a,b} h^{2k+2} + \frac{1}{C_{T,a,b}} \int_0^T \sum\limits_{s=1}^N \left\| {\mathscr{S}e_L} \right\|_{I_s}^2 \mathrm{d}t.
\end{equation*}
By using Lemma \ref{lem1-1} and property \eqref{n4} we have
\begin{equation} \label{n6}
\left\| \delta _t^\alpha \left( \mathscr{S}^+ V(\xi ,{t_n}) - V(\xi ,{t_n}) \right) \right\|_{L^2(\Omega )} \le C\left( h^{k + 1} + \Delta t^{2 - \alpha} \right).
\end{equation}
Using \eqref{n5}, \eqref{ta} and \eqref{n6} we have
\begin{equation}
{\left\| {\sum\limits_{j = 1}^M {W({\alpha _j})\Delta {\pi _j}\delta _t^{{\alpha _j}}\left( {{\mathscr S^ + }V(\xi ,{t_n}) - V(\xi ,{t_n})} \right)} } \right\|_{{L^2}(\Omega )}} \le C\left( {{h^{k + 1}} + (\Delta {t)^{1 + \frac{p }{2}}} + {p^2}} \right).
\end{equation}
Hence
\begin{equation*}
\left( \sum\limits_{j = 1}^M \Delta {\pi _j} W({\alpha _j}) \delta _t^{{\alpha _j}} \left(\mathscr{S^-}V^n - V^n\right), \mathscr{S}^- e_{V^n} \right) \le C\left( h^{2k + 2} + \Delta t^{2 + p} + p^4 \right) + c \left\| \mathscr{S}^- V^n - V^n \right\|_{L^2(\Omega)}^2.
\end{equation*}
It then follows that
\begin{equation*}
\hspace*{-2cm} \left( \sum\limits_{j = 1}^M \frac{W({\alpha _j})\Delta {\pi _j}}{\lambda _j} \left(\mathscr{S^-}V^n - V_h^n\right), \mathscr{S}^- e_{V^n} \right) \le \left( \sum\limits_{j = 1}^M \frac{W({\alpha _j})\Delta {\pi _j}}{\lambda _j} \sum\limits_{l = 1}^{n - 1} \left( a_{n - l - 1}^{{\alpha _j}} - a_{n - l}^{{\alpha _j}} \right) \left(\mathscr{S^-}V^l - V_h^n\right), \mathscr{S}^- e_{V^n} \right).
\end{equation*}
\begin{equation*}
\hspace*{3cm} + \left( \sum\limits_{j=1}^M \frac{W({\alpha _j})\Delta {\pi _j} a_{n - 1}^{{\alpha _j}} (\mathscr{S^-}V^0 - V_h^n)}{\lambda _j}, \mathscr{S}^- e_{V^n} \right) + c \left\| \mathscr{S}^- V^n - V^n \right\|_{L^2(\Omega)}^2
\end{equation*}
\begin{equation*}
\hspace*{0.5cm}+ C\left( {{h^{2k + 2}} + (\Delta {t)^{2 + p }} + {p ^4}} \right) 
\end{equation*}
By using Young’s inequality, we obtain
\begin{equation*}
\hspace*{-7cm}\left\| {{\mathscr S^ - }{e_{{V^n}}}} \right\|_{{L^2}(\Omega )}^2 \le \sum\limits_{j = 1}^M {  {\frac{{W({\alpha _j})\Delta {\pi _j}}}{{{\lambda _j}}}}\sum\limits_{l = 1}^{n - 1} {\left( {a_{n - l - 1}^{{\alpha _j}} - a_{n - l}^{{\alpha _j}}} \right)}}\left\| {{\mathscr S^ - }{e_{{V^l}}}} \right\|_{{L^2}(\Omega )}^2  
\end{equation*}
\begin{equation*}
\hspace*{2.5cm}+ \frac{1}{4} \sum\limits_{j = 1}^M {  {\frac{{W({\alpha _j})\Delta {\pi _j}}}{{{\lambda _j}}}}\sum\limits_{l = 1}^{n - 1} {\left( {a_{n  - 1}^{{\alpha _j}} - a_{n - 1}^{{\alpha _j}}} \right)}}\left\| {{\mathscr S^ - }{e_{{V^n}}}} \right\|_{{L^2}(\Omega )}^2 +\sum\limits_{j = 1}^M {  {\frac{{W({\alpha _j})\Delta {\pi _j}}}{{{\lambda _j}}}} { {a_{n  - 1}^{{\alpha _j}} } }}\left\| {{\mathscr S^ - }{e_{{V^0}}}} \right\|_{{L^2}(\Omega )}^2 
\end{equation*}
\begin{equation*}
\hspace*{2.5cm}+ \frac{1}{4}\sum\limits_{j = 1}^M {  {\frac{{W({\alpha _j})\Delta {\pi _j}}}{{{\lambda _j}}}} {\left( {a_{n  - 1}^{{\alpha _j}} } \right)}}\left\| {{\mathscr S^ - }{e_{{V^n}}}} \right\|_{{L^2}(\Omega )}^2  +cQ\left\| {{\mathscr S^ - }{e_{{V^n}}}} \right\|_{{L^2}(\Omega )}^2 + CQ\left( {{h^{2k + 2}} + (\Delta {t)^{2 + p }} + {p ^4}} \right).
\end{equation*}
Notice the facts that $ \left\| {{S^ - }{e_{{V^0}}}} \right\|_{{L^2}(\Omega )}^{} \le C{h^{k + 1}}. $ Thus,
\begin{equation*}
\hspace*{-7cm}\left\| {{\mathscr S^ - }{e_{{V^n}}}} \right\|_{{L^2}(\Omega )}^2 \le \sum\limits_{j = 1}^M {  {\frac{{W({\alpha _j})\Delta {\pi _j}}}{{{\lambda _j}}}}\sum\limits_{l = 1}^{n - 1} {\left( {a_{n - l - 1}^{{\alpha _j}} - a_{n - l}^{{\alpha _j}}} \right)}}\left\| {{\mathscr S^ - }{e_{{V^l}}}} \right\|_{{L^2}(\Omega )}^2  
\end{equation*}
\begin{equation*}
\hspace*{1cm}+(cQ+ \frac{1}{4}) \sum\limits_{j = 1}^M {  {\frac{{W({\alpha _j})\Delta {\pi _j}}}{{{\lambda _j}}}}\sum\limits_{l = 1}^{n - 1} {}}\left\| {{\mathscr S^ - }{e_{{V^n}}}} \right\|_{{L^2}(\Omega )}^2 +\sum\limits_{j = 1}^M {  {\frac{{W({\alpha _j})\Delta {\pi _j}}}{{{\lambda _j}}}} { {a_{n  - 1}^{{\alpha _j}} } }}h^{2k+2}
\end{equation*}
\begin{equation*}
\hspace*{2.5cm} + C\sum\limits_{j = 1}^M \left( {\frac{{W({\alpha j})\Delta {\pi j}}}{{{\lambda j}}}}\right) \sum\limits{l = 1}^{n - 1} \left| {{\mathscr S^ - }{e{{V^n}}}} \right|{{L^2}(\Omega )}^2 + C\sum\limits_{j = 1}^M \left( {\frac{{W({\alpha _j})\Delta {\pi _j}}}{{{\lambda j}}}} \right) a{n - 1}^{{\alpha _j}} \left( {{h^{2k + 2}} + (\Delta {t)^{2 + p }} + {p ^4}} \right)
\end{equation*}
Assuming that  $ C $ is very small such that $ \frac{3}{4}-cQ>0 $, we have
\begin{equation*}
\hspace*{-1cm}\left\| {{\mathscr S^ - }{e_{{V^n}}}} \right\|_{{L^2}(\Omega )}^2 \le C \left(\sum\limits_{j = 1}^M {  {\frac{{W({\alpha _j})\Delta {\pi _j}}}{{{\lambda _j}}}}\sum\limits_{l = 1}^{n - 1} {\left( {a_{n - l - 1}^{{\alpha _j}} - a_{n - l}^{{\alpha _j}}} \right)}}\left\| {{\mathscr S^ - }{e_{{V^l}}}} \right\|_{{L^2}(\Omega )}^2 +C\sum\limits_{j = 1}^M   {\frac{{W({\alpha _j})\Delta {\pi _j}}}{{{\lambda _j}}}}  {a_{n  - 1}^{{\alpha _j}} }\left( {{h^{2k + 2}} + (\Delta {t)^{2 + p }} + {p ^4}} \right) \right).
\end{equation*}
For $ n=1,2,3,...,m-1 $, we have
\begin{align*}
\hspace*{-1cm}\left\| {{\mathscr S^ - }{e_{{V^m}}}} \right\|_{{L^2}(\Omega )}^2 &\le C \left(\sum\limits_{j = 1}^M \left(  {\frac{{W({\alpha _j})\Delta {\pi _j}}}{{{\lambda _j}}}}\right)\sum\limits_{l = 1}^{m - 1} \left( {a_{n - l - 1}^{{\alpha _j}} - a_{n - l}^{{\alpha _j}}} \right)\right)\left( {{h^{2k + 2}} + (\Delta t)^{2 + p} + {p ^4}} \right) \\
&\quad + C\sum\limits_{j = 1}^M \left( {\frac{{W({\alpha _j})\Delta {\pi _j}}}{{{\lambda _j}}}} \right) a_{n  - 1}^{{\alpha _j}} \left( {{h^{2k + 2}} + (\Delta t)^{2 + p} + {p ^4}} \right) \\
\hspace*{-3cm}&= \left( {{h^{2k + 2}} + (\Delta t)^{2 + p} + {p ^4}} \right)
\end{align*}
then, by using  standard approximation theory we have
\begin{equation*}
{\left\| {V(\xi ,{t_m}) - V_h^m} \right\|_{{L^2}(\Omega )}} \le C\left( {{h^{k + 1}} + {{(\Delta t)}^{1 + \frac{p }{2}}} + {p ^2}} \right).
\end{equation*}
\end{proof}
\section{Numerical results}\label{Sec:5}
In this section, we solve three nonlinear numerical examples of the convection-diffusion equation of fractional order to demonstrate the accuracy and efficiency of the LDG method that is shown by employing L$^2$-error, $\displaystyle{{E}_h = \left\| {V - {V_h}} \right\|_2}$, and the approximate rate of convergence, $\displaystyle{{\Lambda _{order}}} =\frac{log(\text{E}_h)-log(\text{E}_{h/m})}{log(m)}$.
\begin{definition} \label{exam1}
Consider the following  time and space fractional nonlinear equation
\begin{equation*} 
 \frac{{\partial^\alpha V(\xi ,t)}}{{\partial t^\alpha}}+ \frac{{\partial }}{{\partial \xi}}\left( \frac{V^2(\xi,t)}{2}  \right) = \frac{{\partial }}{{\partial \xi}}{\left( \frac{{\partial V(\xi,t)}}{{\partial \xi}} \right)} + (-\mathcal{L})^{\frac{\beta}{2}} V(\xi,t)+ g(\xi,t),\quad   - 1 \le \xi  \le 1, \quad 0 < t \le 1,
\end{equation*}
\begin{equation*}
V_0(\xi)=0,
\end{equation*}
and 
\begin{equation*}
g(\xi,t) = \left( {{{\left( {{\xi ^2} - 1} \right)}^4}\frac{{{\partial ^\alpha }}}{{\partial {t^\alpha }}}{t^2} + 8{t^4}\xi {{({\xi ^2} - 1)}^7} + b{t^2}{{\left( { - \mathcal{L} } \right)}^{\frac{\beta }{2}}}{{\left( {{\xi ^2} - 1} \right)}^4}} \right).
\end{equation*}
The exact solution for $ \beta \in (1,2) $ is  $ V(\xi ,t) = {t ^2}{\left( {{\xi ^2} - 1} \right)^4} $ with $ b = \frac{{\Gamma (8 - \beta )}}{{\Gamma (8)}}. $ we take $ \Delta t = \frac{T}{{500}}, p= \frac{1}{50}. $
\begin{table}[ht!] 
\centering
{\footnotesize{
\caption{The comparison of the obtained  norm error and the convergence rate of LDG method with and without (\cite{aboelenen2018local}) Legendre polynomials for Example \ref{exam1}  versus $k$, $N$, and $\beta$ .} 
\setlength{\tabcolsep}{0.5em}
\centering 
\begin{tabular}{|c|c| c c a a|c c a a|} 
\hline
&&\multicolumn{4}{c|}{$k=1$}&\multicolumn{4}{c|}{$k=2$}\\
\cline{3-10}
&& \multicolumn{2}{c}{LDG method} & \multicolumn{2}{a|}{ method \cite{aboelenen2018local}} & \multicolumn{2}{c}{LDG method}& \multicolumn{2}{a|}{method \cite{aboelenen2018local}} \\
\hline
$\beta$ & $N$ & $E_h$ &$\Lambda _{order}$ & $E_h$ &$\Lambda _{order}$ & $E_h$ & $\Lambda _{order}$ & $E_h$ & $\Lambda _{order}$\\
\hline
\multirow{3}{*}{1.2} & 10 & 1.03e-03 & - & 1.23e-02 & - & 6.21e-04 & - & 8.35e-03 & -  \\
& 20 & 2.73e-04 & 1.91 & 4.61e-03 & 1.42 & 7.84e-05 & 2.98 & 1.21e-03 & 2.79 \\
& 40 & 6.84e-05 & 1.99 & 1.1e-03 & 2.03 & 9.36e-06 & 3.04 & 1.41e-04 & 3.21 \\
\hline
\multirow{3}{*}{1.4} & 10 & 1.23e-03 & - & 1.01e-02 & - & 5.31e-04 & - & 6.24e-03 & - \\
& 20 & 3.01e-04 & 2.03 & 2.51e-03 & 2.01 & 6.53e-05 & 2.96 & 9.23e-04 & 2.76  \\
& 40 & 7.45e-05 & 2.01 & 6.31e-04 & 1.96 & 8.05e-06 & 3.02 & 1.13e-04 & 3.14  \\ 
\hline
\multirow{3}{*}{1.8} & 10 & 6.21e-04 & - & 7.31e-03 & - & 4.22e-04 & - &2.62e-03 & -  \\
& 20 & 1.52e-04 & 2.03 & 1.91e-03 & 1.94 & 5.43e-05 & 2.95 & 3.54e-04 & 2.89  \\
& 40 & 3.78e-05 & 2.00 & 4.71e-04 & 1.99 & 6.76e-06 & 3.00 & 4.66e-05 & 3.08  \\ 
\hline
\end{tabular}
\label{Tbl:ex2}
}}
\end{table}

\end{definition}

\begin{definition} \label{ex12}
 Consider the following  problem:  
\begin{equation*} \label{ex2}
\frac{{{\partial ^\alpha }V(\xi ,t)}}{{\partial {t^\alpha }}} + \frac{{{\partial }  }}{{\partial {\xi }}}(\frac{V^4(\xi ,t)}{2}) +  \frac{{\partial V(\xi ,t)}}{{\partial \xi }}+ (-\mathcal{L})^{\frac{\beta}{2}} V(\xi,t) = g(\xi ,t),\,\,\,\,\,\,\,\,\,0 < \xi  < 1,\,\,\,0 < t \le 1,
\end{equation*}
with the initial condition
\begin{equation*}
V(\xi ,0) = 0,\,\,\,\,\,0 < \xi  < 1,
\end{equation*}
and the boundary conditions
\begin{equation*}
V(0,t) = t^3,\,\,\,\,\, V(1,t) = 0,\,\,\,\,0 < t \le 1,
\end{equation*}
where
\begin{equation*}
g(\xi ,t) = \left[ {{{\left( {1 - {\xi^2}} \right)}^2}\frac{{{\partial ^\alpha }}}{{\partial {t^\alpha }}}{t^2} - 4{t^9}\xi{{\left( {1 - {\xi^2}} \right)}^7} + {t^3}\left( { - 4 + 12{\xi^2}} \right) + b{t^3}{{\left( { - \mathcal{L}} \right)}^{\frac{\beta }{2}}}{{(1 - {\xi^2})}^2}} \right].
\end{equation*}
The exact solution of \ref{ex2} is $ V(\xi ,t) = {t^3}\left( {1 - {\xi }^2} \right)^2 $ with $ b=\frac{\Gamma(8-\beta)}{\Gamma (8)} $. 
\begin{table}[ht!] 
\centering
{\footnotesize{\caption{The LDG method for various $ \beta $ and $ k $ when $ T=1 $, $ \Delta t = \frac{T}{{500}}, p= \frac{1}{50} $ for example \ref{ex12}.} \label{table3}
\setlength{\tabcolsep}{0.5em}
\centering 
\begin{tabular}{|c|c| c c|c c |c c|} 
\hline
&&\multicolumn{2}{c|}{$k=1$}&\multicolumn{2}{c|}{$k=2$}&\multicolumn{2}{c|}{$k=3$}\\
\hline
$\beta$ & $N$ & $E_h$ &$\Lambda _{order}$ & $E_h$ & $\Lambda _{order}$ & $E_h$ & $\Lambda _{order}$\\
\hline
\multirow{3}{*}{1.2} & 10 & 1.45e-04 & - & 1.75e-04 & - & 2.45e-05& -\\
& 20 & 3.55e-05 & 2.03 & 2.25e-05 & 2.95 & 1.50e-06 & 4.02 \\
& 40 & 8.63e-06 & 2.04 & 2.91e-06 & 2.96 & 9.50e-08 & 3.98\\
& 80 & 2.17e-06 & 1.99 & 3.55e-07 & 3.03 & 5.83e-09 & 4.02\\
\hline
\multirow{3}{*}{1.6} & 10 & 1.34e-04 & - & 2.22e-05 & - & 2.43e-05 &-\\
& 20 & 3.28e-05 & 2.03 & 2.81e-06 & 2.98 & 1.53e-06 & 3.98\\
& 40 & 8.33e-06 & 1.97 & 3.55e-07 & 2.98 & 9.60-08 & 3.99 \\ & 80 &   2.09e-06 & 1.98 & 4.55e-08 & 2.96 & 5.97e-09 & 4.00\\ 
\hline
\multirow{3}{*}{1.8} & 10 & 1.22e-04 & - & 2.12e-05 & - & 4.54e-05 & -\\
& 20 & 3.09e-05 & 1.98 & 2.67e-06 & 2.98 & 2.86e-06 & 3.98\\
& 40 & 7.75e-06 & 1.99 & 3.27e-07 & 3.02 & 1.79e-07 & 3.99\\ 
& 80 &  1.92e-06 & 2.01 & 4.10e-08 & 2.99 & 1.11e-08 & 4.01\\ 
\hline
\end{tabular}
}}
\end{table}
\end{definition}

\begin{definition}
\label{exam3}
Consider the following problem:
\begin{equation} \label{eq31}
\begin{cases}
\frac{{\partial^ \alpha V(\xi,t)}}{{\partial t^ \alpha}} + \frac{\partial }{{\partial \xi}}\left( {\frac{{{V^2}(\xi,t)}}{2}} \right) = b\left( { - {{\left( { - \mathcal{L} } \right)}^{\frac{\beta}{2}}}} \right)V(\xi,t) + Z(\xi,t), \quad  (\xi,t) \in  [-2,2] \times (0,0.5],\\
V(\xi,0) = {V_0}(\xi), \quad   \xi \in [-2,2],
\end{cases} 
\end{equation}
with the discontinuous initial condition
\begin{equation*}
{V_0}(\xi) = \begin{cases}
\frac{{{{(1 - {\xi^2})}^4}}}{{10}}, & - 1 \le \xi \le 1,\\
    0, & \text{otherwise}.
\end{cases} 
\end{equation*}
In this example, we set $ b=1 $ and consider the source term as
\begin{equation*}
Z(\xi,t) ={V_0}(\xi)\frac{\partial ^\alpha }{\partial {t^\alpha }}{e^{-t}} +{e^{ - t}}\left(   {e^{ - t}}{V_0}(\xi){V'_0}(\xi) + {\left( { - \mathcal{L} } \right)^{\frac{\beta }{2}}}{V_0}(\xi)\right).
\end{equation*}
The exact solution is
\begin{equation*}
V(\xi,t) = \begin{cases}
\frac{{{{e^{-t}(1 - {\xi^2})}^4}}}{{10}},\quad  - 1 \le \xi \le 1,\\
0, \, \quad  \quad  \quad  \quad \quad \text{otherwise}.
\end{cases}
\end{equation*}

\begin{table}[ht!] 
\centering
{\footnotesize{\caption{Error and temporal convergence orders for various $ \beta $ and $ \Delta t $ when $ T=0.5, $ for example \ref{exam3}.} \label{table4}
\setlength{\tabcolsep}{0.5em}
\centering 
\begin{tabular}{|c| c c|c c |c c|} 
\hline 
 $\beta$ &\multicolumn{2}{c|}{$\beta=1.2$}&\multicolumn{2}{c|}{$\beta=1.6$}&\multicolumn{2}{c|}{$\beta=1.8$}\\
\hline 
 $\Delta t$ & $E_h$ &$\Lambda _{order}$ & $E_h$ & $\Lambda _{order}$ & $E_h$ & $\Lambda _{order}$\\
\hline
 $ T/100 $ & 3.33e-04 & - & 1.30e-04 & - & 1.02e-04& -\\
$ T/200 $ & 1.65e-04 & 1.01 & 6.43e-05 & 1.01 & 4.94e-05 & 1.04 \\
 $ T/400 $ & 0.81e-04 & 1.02 & 3.14e-05 & 1.03 & 2.42e-05 & 1.02 \\
 $ T/800 $ & 0.40e-04 & 1.01 & 1.51e-05 & 1.05 & 1.17e-05 & 1.04\\
\hline
\end{tabular}
}}
\end{table}
\begin{table}[ht!] 
\centering
{\footnotesize{\caption{Error and numerical integration convergence orders for various $ \beta $ and $ \Delta t $ at $ T=0.5, $ when $ p $ is small enough for example \ref{exam3}.} \label{table5}
\setlength{\tabcolsep}{0.5em}
\vspace*{-.5cm}
\centering 
\begin{tabular}{|c| c c|c c |c c|} 
\hline 
 $\beta$ &\multicolumn{2}{c|}{$\beta=1.3$}&\multicolumn{2}{c|}{$\beta=1.7$}&\multicolumn{2}{c|}{$\beta=1.8$}\\
\hline 
 $p$ & $E_h$ &$\Lambda _{order}$ & $E_h$ & $\Lambda _{order}$ & $E_h$ & $\Lambda _{order}$\\
\hline
 $ 1/10 $ & 3.14e-04 & - & 3.44e-04 & - & 2.31e-04& -\\
$ 1/20 $ & 7.53e-05 & 2.06 & 8.53e-05 & 2.01 & 5.63e-05 & 2.03 \\
 $ 1/40 $ &1.83e-05 & 2.04 &2.10e-05 & 2.03 & 1.38e-05 & 2.02 \\
 $ 1/80 $ & 4.48e-06 & 2.03 & 5.15e-06 & 2.02 & 3.39e-06 & 2.03\\
\hline
\end{tabular}
}}
\end{table}

\end{definition}

\section*{Conclusions}  
This study employs the local discontinuous Galerkin (LDG) method with Legendre polynomial basis functions to approximate non-linear convection-diffusion governed by space and time fractional Laplacian operators. We recast the principal problem into a first-order system before leveraging the discontinuous Galerkin approach. Our findings indicate that the method's precision can be enhanced through judicious selection of basis functions. Specifically, using the Legendre basis function, we establish that the proposed LDG technique is stable and exhibits convergence of $O(h^{k+1}+(\Delta t)^{1+\frac{p}{2}}+p^2)$.
Our computational results corroborate this analysis, highlighting the superiority of these polynomials over conventional ones for methodological basis. Additionally, we deduce that the method's accuracy scales positively with the degree of the basis function.

\section*{Conflict of Interest}
The authors declare no conﬂict of interest.

\bibliographystyle{ieeetr}

\bibliography{reference}

\end{document}